\newtheorem{theorem}{Theorem}[section]
\newtheorem{lemma}[theorem]{Lemma}
\newtheorem{proposition}[theorem]{Proposition}
\newtheorem{conjecture}[theorem]{Conjecture}
\theoremstyle{definition}
\newtheorem{definition}[theorem]{Definition}
\newtheorem{example}[theorem]{Example}
\theoremstyle{remark}
\newtheorem*{remark*}{Remark}
\newcommand{\ra}[1]{\renewcommand{\arraystretch}{#1}}
\numberwithin{equation}{section}
\begin{document}

\title{Set-theoretic Yang-Baxter (co)homology theory of involutive non-degenerate solutions}

\author{J\'ozef H. Przytycki}

\author{Petr Vojt\v{e}chovsk\'y}

\author{Seung Yeop Yang}

\email[J\'ozef H. Przytycki]{przytyck@gwu.edu}

\email[Petr Vojt\v{e}chovsk\'y]{petr@math.du.edu}

\email[Seung Yeop Yang]{seungyeop.yang@knu.ac.kr}

\address{Department of Mathematics, The George Washington University, Washington, DC 20052, USA and University of Gda\'nsk, Poland}

\address{Department of Mathematics, University of Denver, 2390 S York St, Denver, Colorado, 80208, USA}

\address{Department of Mathematics, Kyungpook National University, Daegu, 41566, Republic of Korea}

\begin{abstract}
W. Rump showed that there exists a one-to-one correspondence between involutive right non-degenerate solutions of the Yang-Baxter equation and cycle sets. J. S. Carter, M. Elhamdadi, and M. Saito, meanwhile, introduced a homology theory of set-theoretic solutions of the Yang-Baxter equation in order to define cocycle invariants of classical knots. In this paper, we introduce the normalized homology theory of an involutive right non-degenerate solution of the Yang-Baxter equation and compute the normalized set-theoretic Yang-Baxter homology of cyclic racks. Moreover, we explicitly calculate some $2$-cocycles, which can be used to classify certain families of torus links.
\end{abstract}

\keywords{set-theoretical solution of Yang-Baxter equation, cycle set, normalized Yang-Baxter (co)homology theory}

\subjclass[2010]{16T25, 20N05, 57M27}

\maketitle

\section{Introduction}

For a given set $X,$ a \emph{set-theoretic solution} of the Yang-Baxter equation is a function $R:X\times X\to X\times X$ satisfying the following equation
$$(R \times \textrm{Id}_{X})(\textrm{Id}_{X} \times R)(R \times \textrm{Id}_{X}) = (\textrm{Id}_{X} \times R)(R \times \textrm{Id}_{X})(\textrm{Id}_{X} \times R),$$
where $\textrm{Id}_{X}$ is the identity map on $X.$ A solution $R$ is said to be \emph{left non-degenerate} if $R_{1}(x, - )$ is bijective for every $x \in X$ and \emph{right non-degenerate} if $R_{2}(- , y)$ is bijective for every $y \in X,$ where $R(x,y)=(R_{1}(x,y), R_{2}(x,y)).$ We call $R$ \emph{non-degenerate} if both $R_{1}(x, - )$ and $R_{2}(- , y)$ are bijective. A solution $R$ is said to be \emph{involutive} if $R^{2} = \textrm{Id}_{X \times X}.$

A \emph{magma} $(X,\cdot)$ is a set $X$ equipped with a binary operation $\cdot: X \rightarrow X.$ A magma $(X,\cdot)$ is called a \emph{left quasigroup} if all its left translations $l_{x}:X\to X$, $y\mapsto xy$ are permutations, and a \emph{right quasigroup} if all its right translations $r_{x}:X \to X$, $y \mapsto yx$ are permutations. A magma that is both left quasigroup and right quasigroup will be called \emph{latin} or \emph{quasigroup}. In a right quasigroup, we denote $r_y^{-1}(x)$ by $x/y.$ A right quasigroup is called a \emph{cycle set} or a \emph{Rump right quasigroup} if it satisfies the following identity
\begin{equation}\label{Eq:Rump}
    (zx)(yx) = (zy)(xy).
\end{equation}

The above identity was first studied by Bosbach \cite{Bos} and Traczyk \cite{Tra}. Rump \cite{Rum} showed the importance of this algebraic structure by constructing a relationship between it and an involutive non-degenerate set-theoretic solution of the Yang-Baxter equation.

\begin{theorem}\cite{Rum}\label{thm: Rump}
For a given set $X,$ we let $R:X\times X\to X\times X$ be a map.
There is a one-to-one correspondence between involutive right non-degenerate solutions of the Yang-Baxter equation on $X$ and cycle sets on $X,$ namely
  $$R \mapsto (X,\cdot),~ xy = R_{2}(- , y)^{-1}(x); \hspace{1cm} (X,\cdot) \mapsto R,~ R(x,y) = (y(x / y),x / y).$$
\end{theorem}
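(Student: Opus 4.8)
The plan is to check that the two assignments of the theorem are well defined, take values in the stated classes, and are mutually inverse. The organizing idea is to normalize the whole discussion by the single formula $R(x,y) = (y(x/y),\, x/y)$ — so that $R_2(x,y) = x/y$ identically and all content lies in $R_1$ — and to reparametrize triples by $q := y/z$ and $r := (x/y)/z$, which gives $y = qz$, $x = (rz)(qz)$, and makes $(x,y,z)\mapsto(z,q,r)$ a bijection of $X^3$ (each coordinate obtained by applying a permutation of $X$).

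Starting from a Rump right quasigroup $(X,\cdot)$, I would define $R$ by the formula above. Right non-degeneracy is immediate, since $R_2(-,y)\colon x\mapsto x/y$ is the permutation $r_y^{-1}$; and involutivity is a two-line check using only the cancellation laws $(ab)/b = a$ and $(a/b)b = a$ (with $v = x/y$: $R(R(x,y)) = R(yv,v) = (vy,y) = (x,y)$). The substantive point is the Yang--Baxter equation, and I expect disentangling it to be the main obstacle. Evaluating $(R\times\textrm{Id}_X)(\textrm{Id}_X\times R)(R\times\textrm{Id}_X)$ and $(\textrm{Id}_X\times R)(R\times\textrm{Id}_X)(\textrm{Id}_X\times R)$ at $(x,y,z)$ and equating coordinates produces three identities in nested $\cdot$'s and $/$'s. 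Under the reparametrization above, the third-coordinate identity unfolds \emph{exactly} into $(rz)(qz) = (rq)(zq)$, which is the Rump identity; and the first and second, after the same substitution followed by one or two applications of the Rump identity and the cancellation laws, collapse to a triviality. Hence $R$ is an involutive right non-degenerate solution.

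Conversely, given an involutive right non-degenerate $R$, I would set $xy = R_2(-,y)^{-1}(x)$, so that $r_y = R_2(-,y)^{-1}$ is a permutation and $(X,\cdot)$ is a right quasigroup with $x/y = R_2(x,y)$; and involutivity gives $R_2(R_1(x,y),R_2(x,y)) = y$, i.e. $R_1(x,y)/(x/y) = y$, i.e. $R_1(x,y) = y(x/y)$, so $R$ already has the normal form. Re-running the computation of the previous paragraph, the third-coordinate consequence of the Yang--Baxter equation is precisely $(rz)(qz) = (rq)(zq)$; since $(z,q,r)$ ranges over all of $X^3$, this is the Rump identity, so $(X,\cdot)$ is a Rump right quasigroup.

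Finally, the two assignments invert each other. From $R$, passing to $(X,\cdot)$ and reassembling a solution returns $(y(x/y),\,x/y) = (R_1(x,y),R_2(x,y)) = R(x,y)$ by the normal form. From $(X,\cdot)$, building $R$ and then $x\ast y := R_2(-,y)^{-1}(x)$ returns, since $R_2(-,y) = r_y^{-1}$, the map $r_y$, so $x\ast y = x\cdot y$. This closes the correspondence. (The same equivalence can be extracted from the usual three component conditions attached to a braided set, but the reparametrization above keeps everything explicit and self-contained.)
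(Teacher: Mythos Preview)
The paper does not actually prove Theorem~\ref{thm: Rump}; it is stated with a citation to \cite{Rum} and used as a black box throughout. So there is no ``paper's own proof'' to compare against.

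Your argument is correct and self-contained. The normalization step---using right non-degeneracy to write $R_2(x,y)=x/y$ and then involutivity to force $R_1(x,y)=y(x/y)$---is exactly the right way to reduce everything to the single formula $R(x,y)=(y(x/y),x/y)$. The reparametrization $q=y/z$, $r=(x/y)/z$ (so $y=qz$, $x=(rz)(qz)$) is a clean device: it is a bijection of $X^3$ since each coordinate is obtained by a right translation, and under it the third-coordinate identity of the Yang--Baxter equation becomes literally $(rq)(zq)=(rz)(qz)$, which is the Rump identity \eqref{Eq:Rump}. Your claims about the first and second coordinates also check out: with one application of the Rump identity each, the second-coordinate equation reduces to $qr=qr$ and the first to $(zr)(qr)=(zq)(rq)$, which is again the Rump identity. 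The mutual-inverse verification is immediate once the normal form is in place.

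One small presentational remark: in the converse direction you deduce the Rump identity from the third coordinate alone, which is fine, but it is worth saying explicitly that the bijectivity of $(x,y,z)\mapsto(z,q,r)$ is what lets you conclude the identity for \emph{all} $(z,q,r)\in X^3$; you do say this, but it is the one point a reader might question.
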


In the theory of general Yang-Baxter operators, the operators satisfying quadratic equation $R^{2} = x R + y \textrm{Id}_{X}$ are of special value because, as shown by Jones, they lead to Jones and Homflypt polynomial invariants of knots and links (see \cite{Jon, Tur}). For example, column unital Yang-Baxter operators (e.g., stochastic matrices) were discussed in \cite{PW}. In the set-theoretic world two special cases remain: $R^{2} = \textrm{Id}_{X}$ and $R^{2} = R.$ The involutory case is discussed in this paper and the idempotent case is discussed in \cite{SV}.

Carter, Elhamdadi, and Saito \cite{CES} introduced a homology theory of set-theoretic solutions of the Yang-Baxter equation, and defined cocycle knot invariants in a state-sum formulation. This theory is generalized and modified to obtain invariants of virtual links \cite{CN} and handlebody-links \cite{IIKKMO}, etc.

We investigate basic notions and properties of cycle sets in Section \ref{Section 1.1}. In Section \ref{Section 2}, we introduce the normalized homology theory of an involutive non-degenerate solution of the Yang-Baxter equation. We also show that set-theoretic Yang-Baxter homology groups of a certain family of solutions split into normalized and degenerate parts. In Section \ref{Section 3}, we explain how the cocycles of normalized set-theoretic Yang-Baxter cohomology groups can be used to define invariants of links of codimension two.

\subsection{Preliminaries}\label{Section 1.1}

A magma $(X,\cdot)$ is said to be \emph{uniquely $2$-divisible} if the squaring function $x \mapsto x^{2}$ is bijective and is said to be \emph{$\Delta$-bijective} if the function $\Delta: X \times X \rightarrow X \times X$ defined by $\Delta(x,y)=(xy,yx)$ is bijective. It is easy to check that every bijective magma is uniquely $2$-divisible. Rump remarked on basic properties of cycle sets.

\begin{proposition}\cite{Rum}\label{Prop:Rump}
\begin{enumerate}
  \item Every uniquely $2$-divisible cycle set is $\Delta$-bijective.
  \item Every finite cycle set is uniquely $2$-divisible.\footnote{See \cite{BKSV} for a short proof.}
  \item A cycle set is uniquely $2$-divisible if and only if the corresponding solution is non-degenerate.
\end{enumerate}
\end{proposition}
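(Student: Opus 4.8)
The plan is to reduce everything to two instances of \eqref{Eq:Rump} together with cancellation in the right quasigroup. First I would record the two consequences of \eqref{Eq:Rump} obtained by specializing $z:=x$ and $z:=y$, namely
\[ (xy)^2 = x^2(yx) \qquad \text{and} \qquad (yx)^2 = y^2(xy), \]
and use freely the tautology $(uv)/v=u$ and its consequence $u^2/u=u$ that hold in any right quasigroup (in passing, \eqref{Eq:Rump} is equivalent to the operator identity $r_{yx}\circ r_x=r_{xy}\circ r_y$, but only the two displayed identities are needed below). For Part (1), assume the squaring map $x\mapsto x^2$ is bijective. If $\Delta(x,y)=\Delta(x',y')=(a,b)$, then the first displayed identity gives $a^2=x^2b=(x')^2b$, so $x^2=(x')^2$ after applying $r_b^{-1}$, hence $x=x'$; symmetrically the second identity gives $b^2=y^2a=(y')^2a$, so $y=y'$, and thus $\Delta$ is injective. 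For surjectivity, given $(a,b)$ I would use surjectivity of squaring to pick $x$ with $x^2=a^2/b$ and set $u:=b/x$; then $ux=b$ by the definition of $/$, while $(xu)^2=x^2(ux)=x^2b=a^2$ by the first displayed identity, so $xu=a$ by injectivity of squaring, i.e.\ $\Delta(x,u)=(a,b)$.

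For Part (3), recall from Theorem~\ref{thm: Rump} that $R(x,y)=(y(x/y),\,x/y)$, so that $R_2(-,y)=r_y^{-1}$ is automatically a bijection; hence a Rump right quasigroup always yields a right non-degenerate solution, and $R$ is non-degenerate precisely when $R_1(x,-)$ is a bijection for every $x$, where $R_1(x,y)=y(x/y)$. The key step is to ``fibre'' $\Delta$: for fixed $x$ the map $\phi_x$ given by $\phi_x(y)=(x/y,y)$ is a bijection from $X$ onto $F_x:=\{(c,y):cy=x\}$, the family $\{F_x\}_{x\in X}$ partitions $X\times X$ (each pair $(c,y)$ lies in $F_{cy}$), and $\Delta(\phi_x(y))=((x/y)y,\,y(x/y))=(x,R_1(x,y))$. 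Thus $\Delta$ sends $F_x$ into $\{x\}\times X$ and, under the evident identifications $F_x\cong X\cong\{x\}\times X$, restricts on $F_x$ to $R_1(x,-)$; since $\{F_x\}_{x}$ and $\{\{x\}\times X\}_{x}$ both partition $X\times X$ and $\Delta$ respects these partitions, $\Delta$ is bijective if and only if every $R_1(x,-)$ is, that is, if and only if $R$ is non-degenerate. Combining this with Part (1) and with the easy converse --- valid in any magma, since $\Delta(x,x)=(x^2,x^2)$ forces injectivity of squaring, and $\Delta(x,y)=(a,a)$ entails $\Delta(y,x)=(a,a)$, whence $x=y$ and $a=x^2$ --- one obtains the chain ``$R$ non-degenerate $\iff$ $\Delta$-bijective $\iff$ uniquely $2$-divisible'', which contains Part (3).

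Part (2) is the step I expect to be the main obstacle. The arguments above use \eqref{Eq:Rump} only through the two displayed identities together with right-cancellation, so finiteness of $X$ has to be exploited in an essential way here. Since for finite $X$ it suffices to prove that the squaring map is injective, a self-contained route would be to establish directly that $\Delta$ is injective on a finite Rump right quasigroup and then invoke the converse from the previous paragraph; but I do not see how to get injectivity of $\Delta$ without already controlling the squaring map, so here I would instead appeal to the short proof of \cite{BKSV} (or to Rump's structure-group argument in \cite{Rum}).
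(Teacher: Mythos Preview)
The paper does not supply a proof of this proposition; it is merely quoted from \cite{Rum} with a footnote pointing to \cite{BKSV} for part~(2). So there is no ``paper's own proof'' to compare against, and your write-up in fact goes well beyond what the paper provides.

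Your arguments for (1) and (3) are correct. The two specializations of \eqref{Eq:Rump} you record, $(xy)^2=x^2(yx)$ and $(yx)^2=y^2(xy)$, together with right cancellation do yield injectivity and surjectivity of $\Delta$ under unique $2$-divisibility exactly as you describe. The fibring observation in (3) is a clean way to see the equivalence $\Delta$-bijective $\iff$ $R$ non-degenerate: since $\Delta$ carries the partition $\{F_x\}_x$ into the partition $\{\{x\}\times X\}_x$ and, after the identifications $F_x\cong X\cong\{x\}\times X$, restricts on each piece to $R_1(x,-)$, bijectivity of $\Delta$ is equivalent to bijectivity of every $R_1(x,-)$. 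Your ``easy converse'' ($\Delta$-bijective $\Rightarrow$ uniquely $2$-divisible) is also fine and indeed holds in any magma, so the chain of equivalences you state is valid and contains (3) (and in fact upgrades (1) to an equivalence).

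For (2) you defer to \cite{BKSV}/\cite{Rum}, which is precisely what the paper itself does; there is no gap relative to the paper, though of course this means part~(2) is not self-contained here either.
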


Racks and quandles, which are well-known set-theoretic solutions of the Yang-Baxter equation, are non-associative algebraic structures satisfying axioms motivated by the Reidemeister moves. The precise definitions are as follows. A \emph{rack} is a magma $(X,*)$ such that:
\begin{enumerate}
  \item (Right self-distributivity) For every $a,b,c \in X,$ $(a*b)*c = (a*c)*(b*c).$
  \item (Invertibility) For every $b \in X,$ the mapping $r_{b}$ is bijective.
\end{enumerate}
If a rack $(X,*)$ satisfies the idempotency, i.e., for every $a \in X$ we have $a*a=a,$ then we call it a \emph{quandle}. We can obtain basic examples of racks and quandles using cyclic groups. We denote the cyclic group of order $n$ by $\mathbb{Z}_{n}.$
\begin{enumerate}
  \item We call $\mathbb{Z}_{n}$ with the operation $i*j=i+1$ (mod $n$) the \emph{cyclic rack} of order $n$ and denote it by $C_{n}.$
  \item $\mathbb{Z}_{n}$ with the operation $i*j=2j-i$ (mod $n$) is called the \emph{dihedral quandle} of order $n,$ denoted by $R_{n}.$
  \item A module $M$ over the Laurent polynomial ring $\mathbb{Z}[t^{\pm1}]$ with the operation $a * b = ta + (1-t)b$ is called an \emph{Alexander quandle}.
\end{enumerate}
It is easy to check that every cyclic rack is a cycle set. In order for an Alexander quandle $M$ to become a cycle set, all elements of $M$ have to be annihilated by $(1-t)^{2}$:

\begin{proposition}
Let $M$ be an Alexander quandle. Then $M$ is a cycle set if and only if $(1-t)^{2}$ annihilates all elements of $M.$
\end{proposition}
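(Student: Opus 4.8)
The plan is a short direct computation, since every Alexander quandle is automatically a right quasigroup. Indeed, because $t$ is a unit in $\mathbb{Z}[t^{\pm 1}]$, each right translation $r_{b}\colon x\mapsto x*b = tx+(1-t)b$ is invertible, with $r_{b}^{-1}(y) = t^{-1}\bigl(y-(1-t)b\bigr)$. So the only thing to decide is whether identity \eqref{Eq:Rump} holds in $M$, and I would simply expand both of its sides.

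Writing $a*b = ta+(1-t)b$ and using that $M$ is a $\mathbb{Z}[t^{\pm 1}]$-module, I would compute, with the repeated simplification $t(1-t)+(1-t)^{2} = (1-t)$,
\[ (z*x)*(y*x) = t^{2}z + t(1-t)\,y + (1-t)\,x, \qquad (z*y)*(x*y) = t^{2}z + t(1-t)\,x + (1-t)\,y. \]
Subtracting, the $z$-terms cancel and what remains is
\[ (z*x)*(y*x) - (z*y)*(x*y) = (1-t)^{2}(x-y). \]
Hence \eqref{Eq:Rump} holds in $M$ if and only if $(1-t)^{2}(x-y) = 0$ for all $x,y\in M$.

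To conclude, I would observe that this last condition is equivalent to $(1-t)^{2}$ annihilating $M$: one implication is trivial, and for the other, specializing $y = 0$ gives $(1-t)^{2}x = 0$ for every $x\in M$. I do not anticipate any genuine obstacle; the only points needing care are the bookkeeping in the two expansions and the (easy) remark that the right-quasigroup requirement in the definition of a Rump right quasigroup is satisfied by every Alexander quandle for free.
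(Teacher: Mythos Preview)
Your proposal is correct and follows essentially the same approach as the paper: both expand the two sides of the Rump identity for the Alexander operation and reduce the question to whether $(1-t)^{2}(x-y)=0$ for all $x,y\in M$. Your write-up is slightly more explicit in noting that the right-quasigroup axiom is automatic and in passing from $(1-t)^{2}(x-y)=0$ to $(1-t)^{2}M=0$, but the argument is the same.
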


\begin{proof}
Note that for all $x,y,z\in M:$
\begin{align*}
& ~\quad (z*x)*(y*x) = (z*y)*(x*y)\\
& \Leftrightarrow (tz+(1-t)x)*(ty+(1-t)x) = (tz+(1-t)y)*(tx+(1-t)y)\\
& \Leftrightarrow (1-t)^{2}x = (1-t)^{2}y\\
& \Leftrightarrow (1-t)^{2}(x-y) = 0.
\end{align*}
\end{proof}

Similarly, one can check that the dihedral quandle $R_{n}$ is a cycle set if and only if $n$ divides $4,$ i.e., only $R_{4}$ is a non-trivial\footnote{A quandle $X$ is said to be \emph{trivial} if $x*y=x$ for all $x, y \in X.$} dihedral quandle which is also a cycle set.\\

Let $(G,+)$ be an abelian group. For given endomorphisms $\phi, \psi$ of $(G,+)$ and $c \in G,$ we define the binary operation $x*y = \phi(x)+\psi(y)+c.$ The magma $(G,*)$ is said to be \emph{affine over} $(G,+)$ and is denoted by $\textrm{Aff}(G, \phi, \psi, c).$ Note that $\textrm{Aff}(G, \phi, \psi, c)$ forms a right quasigroup (resp., quasigroup) if and only if $\phi \in \textrm{Aut}(G,+)$ (resp., $\phi, \psi \in \textrm{Aut}(G,+)$).

Denote by $\mathbb F_q$ the finite field of order $q$ and note that the automorphism group of $(\mathbb F_q^n,+)$ is the general linear group $GL_n(q)$. A large class of affine cycle sets was obtained in \cite{BKSV} from invertible matrices $A$, $B$ satisfying $[A,B]=A^2$. Table \ref{table 1} gives a multiplication table of the affine cycle set $\mathrm{Aff}(\mathbb F_2^2,\binom{1\ 0}{1\ 1},\binom{0\ 1}{1\ 0},\binom{0}{0})$, and Table \ref{table 2} gives a multiplication table of the affine cycle set $\mathrm{Aff}(\mathbb F_4^2,\binom{0\ u}{u^2\ 0},\binom{0\ 1}{1\ 0},\binom{0}{0})$, where $u$ is a primitive element of $\mathbb F_4.$ Neither of the two cycle sets is a rack.

\begin{table}[h]
  \centering
  \caption{An affine cycle set of order $4$}\label{table 1}
  \begin{tabular}{c|cccc}
 $\ast$ & 1 & 2 & 3 & 4 \\
 \hline
 1 & 1 & 3 & 2 & 4  \\
 2 & 2 & 4 & 1 & 3  \\
 3 & 4 & 2 & 3 & 1  \\
 4 & 3 & 1 & 4 & 2

\end{tabular}
\end{table}

\section{Normalized set-theoretic Yang-Baxter (co)homology of cycle sets}\label{Section 2}

Given a set $X,$ we let $R:X \times X \rightarrow X \times X$ be a solution of the set-theoretic Yang-Baxter equation on $X.$ Consider two maps $\mu,\nu:X \times X \rightarrow X$ defined by $\mu(x,y)=R_{2}(x,y)$ and $\nu(x,y)=R_{1}(x,y),$ where $R(x,y)=(R_{1}(x,y),R_{2}(x,y)).$
Let $C_{n}^{YB}(X)$ be the free abelian group generated by $n$-tuples $(x_{1},\ldots ,x_{n})$ of elements of $X.$ Let us define an $n$-dimensional boundary homomorphism $\partial_{n}^{YB}: C_{n}^{YB}(X) \rightarrow C_{n-1}^{YB}(X)$ by $\sum\limits_{i=1}^{n}(-1)^{i+1}(d_{i,n}^{l}-d_{i,n}^{r}),$ where the two face maps $d_{i,n}^{l},d_{i,n}^{r}:C_{n}^{YB}(X) \rightarrow C_{n-1}^{YB}(X)$ are given by
$$d_{i,n}^{l} = (\mu \times \textrm{Id}_{X}^{\times (n-2)}) \circ (\textrm{Id}_{X} \times R \times \textrm{Id}_{X}^{\times (n-3)}) \circ \cdots \circ (\textrm{Id}_{X}^{\times (i-2)} \times R \times \textrm{Id}_{X}^{\times (n-i)}),$$
$$d_{i,n}^{r} = (\textrm{Id}_{X}^{\times (n-2)} \times \nu) \circ (\textrm{Id}_{X}^{\times (n-3)} \times R \times \textrm{Id}_{X}) \circ \cdots \circ (\textrm{Id}_{X}^{\times (i-1)} \times R \times \textrm{Id}_{X}^{\times (n-i-1)}).$$
Since $\partial_{n-1}^{YB} \circ \partial_{n}^{YB}=0,$ $C_{*}^{YB}(X):=(C_{n}^{YB}(X),\partial_{n}^{YB})$ forms a chain complex. Note that the face maps defined above can be illustrated as in Figure \ref{YBfacemap}. See \cite{Leb, Prz} for further details.

\begin{figure}[h]
\centerline{{\psfig{figure=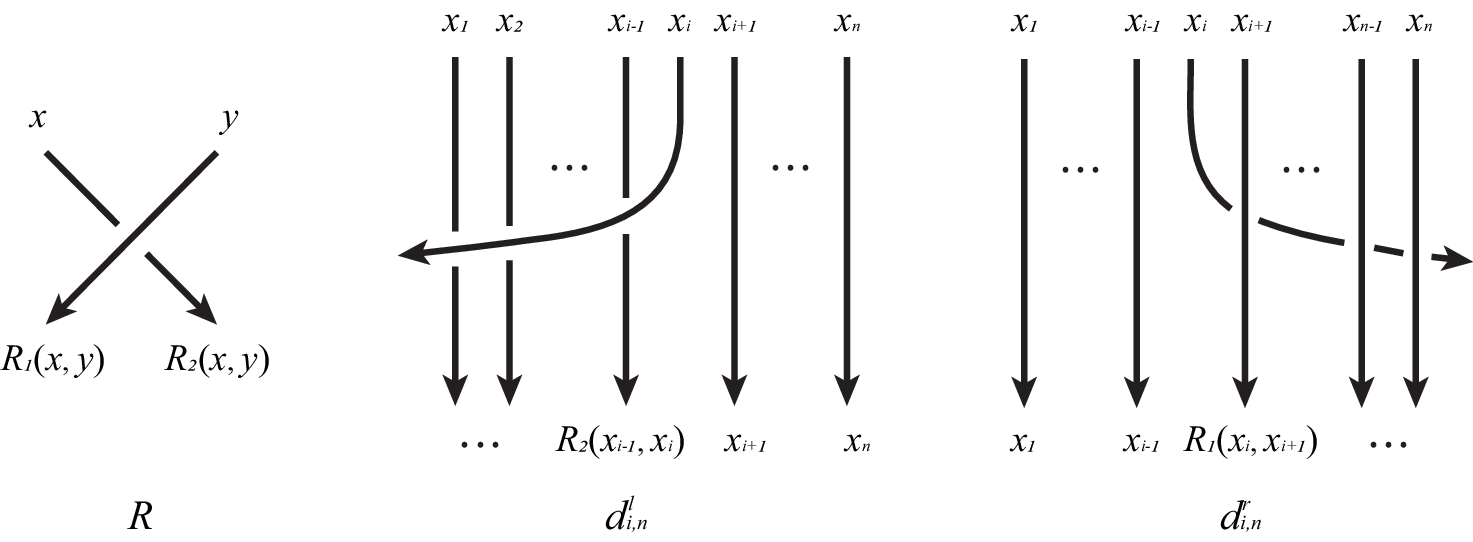,height=5.2cm}}}
\caption{Diagrammatic interpretation of Yang-Baxter face maps}
\label{YBfacemap}
\end{figure}

For a given abelian group $A,$ the yielded homology and cohomology groups $H_{*}^{YB}(X;A)$ and $H^{*}_{YB}(X;A)$ are called the \emph{set-theoretic Yang-Baxter homology and cohomology groups of $X$ with coefficients in $A.$} See \cite{CES} for details.\\

For a cycle set $X,$ we let $R:X \times X \rightarrow X \times X$ be the map given by $R(x,y)=(y(x / y),x / y).$ Then $R$ is a set-theoretic solution of the Yang-Baxter equation by Theorem \ref{thm: Rump}. Consider the subgroup $C_{n}^{D}(X)$ of $C_{n}^{YB}(X)$ defined by
$$C_{n}^{D}(X)=\text{span}\{s_{1,n}^{YB}(C_{n-1}^{YB}(X)), s_{2,n}^{YB}(C_{n-1}^{YB}(X)), \ldots , s_{n-1,n}^{YB}(C_{n-1}^{YB}(X))\},$$
where $s_{i,n}^{YB}:C_{n-1}^{YB}(X) \rightarrow C_{n}^{YB}(X)$ are the degeneracy maps given by
$$s_{i,n}^{YB}(x_{1}, \ldots ,x_{n-1})=(x_{1}, \ldots, x_{i-1}, x_{i}x_{i}, x_{i}, x_{i+1}, \ldots ,x_{n-1})$$
if $n \geq 2,$ otherwise we let $C_{n}^{D}(X)=0.$

Before we prove that the degenerate chain groups defined above form a sub-chain complex of the set-theoretic Yang-Baxter chain complex, let us consider the following identities needed in the proof.

\begin{lemma}\label{lemma: identities}
For a cycle set $X,$ the following identities hold for all $x,y \in X$:
\begin{enumerate}
  \item $(xx)(y/(xx)) = (x((y/(xx))/x))(x((y/(xx))/x)),$
  \item $(xx)(y(x/y)) = (x/y)(x/y).$
\end{enumerate}
\end{lemma}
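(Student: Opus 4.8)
The plan is to deduce both identities from a single specialization of the cycle-set law \eqref{Eq:Rump}. Setting $z=x$ in \eqref{Eq:Rump} gives
\begin{equation}\label{Eq:star}
(xx)(yx)=(xy)(xy)\qquad\text{for all }x,y\in X,
\end{equation}
and everything else is bookkeeping with the right-division notation $a/b=r_{b}^{-1}(a)$, equivalently $(a/b)b=a$.

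For (1), I would introduce $u:=y/(xx)$ and $v:=u/x$, so that $u(xx)=y$ and $vx=u$. The left-hand side of (1) is then $(xx)\,u=(xx)(vx)$, and \eqref{Eq:star} applied with $y$ replaced by $v$ rewrites this as $(xv)(xv)$; since $v=(y/(xx))/x$, this is exactly the right-hand side of (1). For (2), I would set $w:=x/y$, so that $wy=x$ and $y(x/y)=yw$; then \eqref{Eq:star} applied with $x$ replaced by $w$ gives $(ww)(yw)=(wy)(wy)=xx$, and dividing both sides on the right by $y(x/y)=yw$ yields $(x/y)(x/y)=(xx)/(y(x/y))$, which is (2). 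Thus, after these substitutions, each identity collapses to a single instance of \eqref{Eq:star}. There is no genuine obstacle; the only point that needs a little care is unwinding the iterated division $(y/(xx))/x$ in (1) in the correct order, which is precisely the role of the auxiliary letters $u,v,w$.

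It is worth recording a more structural reading. The pair $(xx,x)$ is a fixed point of the solution $R(x,y)=(y(x/y),x/y)$ of Theorem \ref{thm: Rump}: since $xx/x=x$ we get $R(xx,x)=(xx,x)$. Writing out both sides of the Yang-Baxter equation on the triple $(y,xx,x)$, using $R(xx,x)=(xx,x)$ to collapse one side, and comparing coordinates reproduces (1); carrying out the same computation on the triple $(xx,x,y)$ reproduces (2), and in particular makes clear that the object occurring in (2) is the quotient $(xx)/(y(x/y))$. This is the conceptual origin of the two identities, although the derivation via \eqref{Eq:star} is the shortest route to a proof.
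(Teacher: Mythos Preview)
Your proof is correct and follows essentially the same route as the paper's: both arguments reduce each identity to the single specialization $(xx)(yx)=(xy)(xy)$ of the Rump law via the same substitutions, with the paper organizing (2) by first multiplying and dividing $(x/y)(x/y)$ by $y(x/y)$ before applying the identity (which is your division step carried out in reverse). You also correctly observe that (2) should read $(xx)/(y(x/y))=(x/y)(x/y)$, as the paper's own proof and the subsequent use of the identity in the sub-chain-complex theorem confirm.
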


\begin{proof}
(1) Let $x, y \in X.$ Then
\begin{flalign*}
(x((y/(xx))/x))(x((y/(xx))/x))
&=(xx)(((y/(xx))/x)x) \textrm{~by the right Rump identity}\\
&=(xx)(y/(xx)).
\end{flalign*}

(2) For every $x,y \in X,$ we have
\begin{flalign*}
(x/y)(x/y)
&=(((x/y)(x/y))(y(x/y)))/(y(x/y))\\
&=(((x/y)y)((x/y)y))/(y(x/y)) \textrm{~by the right Rump identity}\\
&=(xx)(y(x/y)).
\end{flalign*}
\end{proof}

\begin{theorem}\label{theorem: subchain}
Let $X$ be a cycle set. Then $(C_{n}^{D}(X),\partial_{n}^{YB})$ forms a sub-chain complex of $(C_{n}^{YB}(X),\partial_{n}^{YB}).$
\end{theorem}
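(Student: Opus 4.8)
The goal is to prove $\partial_n^{YB}(C_n^D(X))\subseteq C_{n-1}^D(X)$. Since $C_n^D(X)$ is generated by the degenerate chains $s_{j,n}^{YB}(x_1,\ldots,x_{n-1})$, it suffices to show $\partial_n^{YB}\bigl(s_{j,n}^{YB}(x_1,\ldots,x_{n-1})\bigr)\in C_{n-1}^D(X)$ for every $j\in\{1,\ldots,n-1\}$ and every generator $(x_1,\ldots,x_{n-1})$ of $C_{n-1}^{YB}(X)$. The approach is the usual one for statements of this type: expand $\partial_n^{YB}\circ s_{j,n}^{YB}=\sum_{i=1}^{n}(-1)^{i+1}\bigl(d_{i,n}^{l}\circ s_{j,n}^{YB}-d_{i,n}^{r}\circ s_{j,n}^{YB}\bigr)$ and analyze each composite $d_{i,n}^{l/r}\circ s_{j,n}^{YB}$ according to how the cascade of copies of $R$ defining $d_{i,n}^{l}$ (which pulls the $i$-th strand out on the left) or $d_{i,n}^{r}$ (which pulls it out on the right) meets the inserted \emph{diagonal block} $(x_j x_j,x_j)$ sitting in positions $j,j+1$ of $s_{j,n}^{YB}(x_1,\ldots,x_{n-1})$.

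The only input about $R$ that is needed is that $R$ fixes every pair of the form $(zz,z)$: since $zz/z=z$ we have $R(zz,z)=\bigl(z(zz/z),\,zz/z\bigr)=(z\cdot z,\,z)=(zz,z)$. I would then organize the terms into three groups. First, when the cascade of copies of $R$ in $d_{i,n}^{l/r}$ lies entirely on one side of the block, the composite $d_{i,n}^{l/r}\circ s_{j,n}^{YB}$ is visibly of the form $s_{j',n-1}^{YB}\circ(\text{a face map of }(x_1,\ldots,x_{n-1}))$ for the appropriately shifted index $j'$, hence lies in $C_{n-1}^D(X)$. Second, the two indices adjacent to the block ($i=j$ and $i=j+1$) give terms whose first crossing hits exactly one leg of the block: that first crossing is $R$ applied to the fixed pair $(x_j x_j,x_j)$ and is therefore trivial, after which $d_{j+1,n}^{l}$ agrees with $d_{j,n}^{l}$ (and $d_{j,n}^{r}$ with $d_{j+1,n}^{r}$); since $i=j$ and $i=j+1$ carry opposite signs in $\partial_n^{YB}$, these four terms cancel in two pairs. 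Third, in the remaining terms the cascade sweeps the whole block across to the other side, and this is exactly where Lemma~\ref{lemma: identities} enters: Lemma~\ref{lemma: identities}(1) shows that pushing a strand past the block from one side converts $(x_j x_j,x_j)$ into a diagonal block $(z'z',z')$ with a relabeled diagonal entry, and Lemma~\ref{lemma: identities}(2), combined with the right-quasigroup cancellation $(a/b)b=a$, handles the push from the other side as well as the case in which the terminal $\mu$ or $\nu$ of a face map absorbs one leg of the block; in every such case the composite is again of the form $s_{j',n-1}^{YB}\circ(\text{a face map})$ and hence lies in $C_{n-1}^D(X)$.

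Collecting the three groups, every summand of $\partial_n^{YB}\bigl(s_{j,n}^{YB}(x_1,\ldots,x_{n-1})\bigr)$ either lies in $C_{n-1}^D(X)$ outright or cancels against a neighbouring summand, which gives the theorem. I expect the bulk of the work — and the only real obstacle — to be in the last group: keeping exact track of how the diagonal entry is relabeled while the block is slid along, verifying that the relabeled $(n-1)$-tuple really is in the image of a single degeneracy $s_{j',n-1}^{YB}$, and dealing with the boundary configurations ($j=1$, $j=n-1$, $i$ adjacent to an endpoint, and small values of $n$) in which a cascade or a face map degenerates. All of these reduce to the two identities of Lemma~\ref{lemma: identities} together with the defining laws of a right quasigroup, but carrying this out uniformly in $n$, $i$ and $j$ is the part that needs care.
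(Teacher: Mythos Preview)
Your proposal is correct and follows essentially the same three-case analysis as the paper's proof: the trivial cases where the cascade misses the diagonal block, the cancellation at $i=j,\,j+1$ via the fixed-point property $R(zz,z)=(zz,z)$, and the sweep-through cases handled by the two parts of Lemma~\ref{lemma: identities}. The complications you anticipate at the end (the terminal $\mu$ or $\nu$ absorbing a leg of the block, and the boundary configurations $j=1$, $j=n-1$, small $n$) do not in fact arise as separate cases---the terminal-absorption situations are already subsumed in your Group~2 cancellation, and the Group~3 computation is uniform in $i$, $j$, $n$ once one checks, exactly as in the paper, that pushing a strand through the block replaces $(x_jx_j,x_j)$ by $(ww,w)$ for the appropriate $w$.
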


\begin{proof}
Suppose that $X$ is a cycle set. Let $\textbf{x}=(x_{1}, \ldots, x_{i-1}, x_{i}x_{i}, x_{i}, x_{i+2}, \ldots ,x_{n}) \in C_{n}^{D}(X).$ It suffices to show that $\partial_{n}^{YB}(\textbf{x}) = \sum\limits_{j=1}^{n}(-1)^{j+1}(d_{j,n}^{l}-d_{j,n}^{r})(\textbf{x}) \in C_{n-1}^{D}(X).$\\
To show this we demonstrate that for $\varepsilon = l$ or $r,$ we have:
$$\left\{
  \begin{array}{ll}
    d_{j,n}^{\varepsilon}(\textbf{x}) \in s_{i,n}^{YB}(C_{n-1}^{YB}(X)) & \hbox{if $j > i$;} \\
    d_{j,n}^{\varepsilon}(\textbf{x}) \in s_{i-1,n}^{YB}(C_{n-1}^{YB}(X)) & \hbox{if $j < i$;} \\
    d_{i,n}^{\varepsilon}(\textbf{x})=d_{i+1,n}^{\varepsilon}(\textbf{x}).
  \end{array}
\right.$$
Denote by $y$ the $(i-1)$-th coordinate of $(\textrm{Id}_{X}^{\times (i-3)} \times R \times \textrm{Id}_{X}^{\times (n-i+1)}) \circ \cdots \circ (\textrm{Id}_{X}^{\times (j-1)} \times R \times \textrm{Id}_{X}^{\times (n-j-1)})(\textbf{x})$ and by $z$ the $(i+2)$-th coordinate of $(\textrm{Id}_{X}^{\times (i+1)} \times R \times \textrm{Id}_{X}^{\times (n-i-3)}) \circ \cdots \circ (\textrm{Id}_{X}^{\times (j-2)} \times R \times \textrm{Id}_{X}^{\times (n-j)})(\textbf{x}).$
Suppose that $j < i.$ Then $d_{j,n}^{l}(\textbf{x})=(\ldots, x_{i}x_{i}, x_{i}, \ldots) \in C_{n-1}^{D}(X)$ (see Figure \ref{subchain} $(i)$). Moreover, by Lemma \ref{lemma: identities} (1), we can check that
$$d_{j,n}^{r}(\textbf{x})=(\ldots, (x_{i}x_{i})(y/(x_{i}x_{i})), (x_{i}((y/(x_{i}x_{i}))/x_{i})), \ldots)$$
\hspace{4.9cm}$=(\ldots, (x_{i}((y/(x_{i}x_{i}))/x_{i}))(x_{i}((y/(x_{i}x_{i}))/x_{i})), (x_{i}((y/(x_{i}x_{i}))/x_{i})), \ldots)$\\
also belongs to $C_{n-1}^{D}(X)$ (see Figure \ref{subchain} $(v)$).\\
Similarly, if we assume that $j > i+1,$ then by Lemma \ref{lemma: identities} (2),
$$d_{j,n}^{l}(\textbf{x})=(\ldots, (x_{i}x_{i})/(z(x_{i}/z)), x_{i}/z, \ldots)=(\ldots, (x_{i}/z)(x_{i}/z), x_{i}/z, \ldots) \in C_{n-1}^{D}(X)$$
(see Figure \ref{subchain} $(iv)$) and, moreover, $d_{j,n}^{r}(\textbf{x})=(\ldots, x_{i}x_{i}, x_{i}, \ldots) \in C_{n-1}^{D}(X)$ (see Figure \ref{subchain} $(viii)$).\\
We now only need to consider the cases in which $j$ is equal to $i$ or $i+1.$ We see that the images of $\textbf{x}$ under $d_{i,n}^{l}$ and $d_{i+1,n}^{l}$ are the same (i.e., $d_{i,n}^{l}(\textbf{x})=(\ldots, x_{i-1}/(x_{i}x_{i}), x_{i}, \ldots)=d_{i+1,n}^{l}(\textbf{x}),$ see Figure \ref{subchain} $(ii)$ and $(iii)$). Similarly, $d_{i,n}^{r}(\textbf{x})=(\ldots, x_{i}x_{i}, x_{i+2}(x_{i}/x_{i+2}), \ldots)=d_{i+1,n}^{r}(\textbf{x}),$ see Figure \ref{subchain} $(vi)$ and $(vii).$ That means $(-1)^{i+1}(d_{i,n}^{l}-d_{i,n}^{r})(\textbf{x})+(-1)^{i+2}(d_{i+1,n}^{l}-d_{i+1,n}^{r})(\textbf{x})=0,$ therefore, $\partial_{n}^{YB}(\textbf{x}) = \sum\limits_{j=1}^{n}(-1)^{j+1}(d_{j,n}^{l}-d_{j,n}^{r})(\textbf{x}) \in C_{n-1}^{D}(X)$ as desired.
\end{proof}

\begin{remark*}
 General degeneracies in homology of non-degenerate set-theoretic solutions of the Yang-Baxter equation and criteria to obtain simplicial modules were discussed in \cite{LV}. Our Theorem \ref{theorem: subchain} is not covered by [Theorem 2.2, \cite{LV}] since, calculating in the cycle set of Table \ref{table 1}, it is not difficult to show that both (2.2) and (2.3) of \cite{LV} fail.
\end{remark*}

\begin{figure}[h]
\centerline{{\psfig{figure=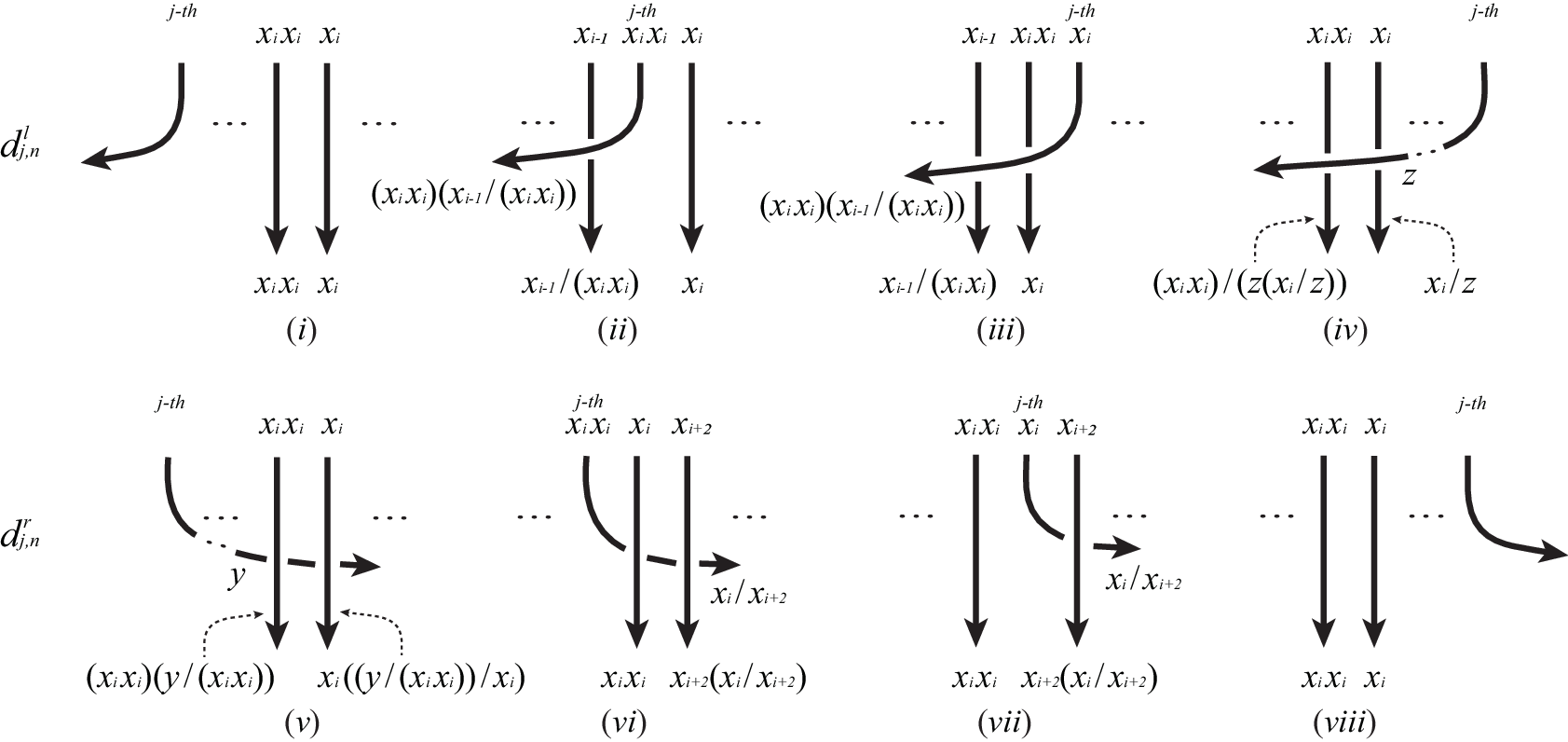,height=7.3cm}}}
\caption{$\partial_{n}^{YB}(C_{n}^{D}(X)) \subset C_{n-1}^{D}(X)$}
\label{subchain}
\end{figure}

We call $H_{n}^{D}(X;A)=H_{n}(C_{*}^{D}(X;A))$ and $H^{n}_{D}(X;A)=H^{n}(C^{*}_{D}(X;A))$ the \emph{degenerate set-theoretic Yang-Baxter homology and cohomology groups of $X$ with coefficients in $A.$}\\
We now let $C_{n}^{NYB}(X)=C_{n}^{YB}(X)/C_{n}^{D}(X),$ and define the quotient chain complex $C_{*}^{NYB}(X):=(C_{n}^{NYB}(X),\partial_{n}^{NYB}),$ where $\partial_{n}^{NYB}$ is the induced homomorphism.\\
For an abelian group $A,$ we define the chain and cochain complexes
$$C_{*}^{NYB}(X;A):= C_{*}^{NYB}(X) \otimes A, ~ \partial^{NYB} := \partial^{NYB} \otimes \textrm{Id}_{A};$$
$$C^{*}_{NYB}(X;A):= \text{Hom}(C_{*}^{NYB}(X), A), ~ \delta_{NYB} := \text{Hom}(\partial^{NYB}, \textrm{Id}_{A}).$$

\begin{definition}
Let $X$ be a cycle set and $A$ an abelian group. Then the following homology group and cohomology group
$$H_{n}^{NYB}(X;A)=H_{n}(C_{*}^{NYB}(X;A)),~ H^{n}_{NYB}(X;A)=H^{n}(C^{*}_{NYB}(X;A))$$
are called $n$th \emph{normalized set-theoretic Yang-Baxter homology group}\footnote{Its geometric realization and homotopical invariants obtained from it were discussed in \cite{WY}.} and the $n$th \emph{normalized set-theoretic Yang-Baxter cohomology group of $X$ with coefficient group $A.$}
\end{definition}

In a way similar to the idea shown in \cite{LN,NP}, we prove that the set-theoretic Yang-Baxter homology groups of certain solutions can be split into the normalized and degenerated parts.

For each $n,$ we consider the homomorphism $\kappa_{n}:C_{n}^{YB}(X) \rightarrow C_{n}^{YB}(X)$ defined by
$$\kappa_{n}(\textbf{x})=(x_{1}-x_{2}x_{2}) \otimes (x_{2}-x_{3}x_{3}) \otimes \cdots \otimes (x_{n-1}-x_{n}x_{n}) \otimes x_{n} $$
and extending linearly to all elements of $C_{n}^{YB}(X).$ It is easy to see that $\textrm{Id}_{C_{n}^{YB}(X)}-\kappa_{n}:C_{n}^{YB}(X) \rightarrow C_{n}^{D}(X)$ is a section for the short exact sequence of chain complexes $0 \rightarrow C_{*}^{D}(X) \rightarrow C_{*}^{YB}(X) \rightarrow C_{*}^{NYB}(X) \rightarrow 0$ (i.e., it is a split short exact sequence) and the short exact sequence stretches out to a long exact sequence of homology groups

$$\cdots \rightarrow H_{n+1}^{NYB}(X) \rightarrow H_{n}^{D}(X) \rightarrow H_{n}^{YB}(X) \rightarrow H_{n}^{NYB}(X) \rightarrow H_{n-1}^{D}(X) \rightarrow \cdots.$$

\begin{lemma}\label{lemma: splitting}
For a cyclic rack $X,$ the sequence $\kappa_{*}$ of the homomorphisms $\kappa_{n}:C_{n}^{YB}(X) \rightarrow C_{n}^{YB}(X)$ is a chain map.
\end{lemma}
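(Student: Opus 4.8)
The plan is to verify directly that $\partial_{n-1}^{YB} \circ \kappa_n = \kappa_{n-1} \circ \partial_n^{YB}$ on generators $\textbf{x} = (x_1,\ldots,x_n)$. Since $X$ is a cyclic rack, the operations simplify drastically: $x_i x_i = x_i + 1$ and $x_i / y = x_i - 1$, independently of the second argument, and consequently the solution is $R(x,y) = (x, y-1)$ — again with both outputs independent of cross-interaction except for the shift. The first step is therefore to record explicit formulas for the face maps $d_{i,n}^l$ and $d_{i,n}^r$ on a tuple for a cyclic rack. Because $R$ acts by the identity in the first coordinate and by the ``subtract $1$'' shift in the second, the face map $d_{i,n}^l$ should delete the $i$-th coordinate after applying a uniform shift, and $d_{i,n}^r$ should delete the $i$-th coordinate with the appropriate shift on the tail; in particular each face map has a very transparent effect of the form ``shift some coordinates by $\pm 1$ and drop one slot.''

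Next I would compute both composites on $\kappa_n(\textbf{x}) = (x_1 - (x_2+1))\otimes\cdots\otimes(x_{n-1}-(x_n+1))\otimes x_n$, exploiting multilinearity: applying a face map to a pure tensor of differences produces a sum of pure tensors, and the key observation is that, for a cyclic rack, the degeneracy-flavored terms collapse. Concretely, I expect that $(\mathrm{Id} - \kappa_n)$ lands in $C_n^D(X)$ (this is already asserted in the excerpt), so it suffices to track the $\kappa$-components; the ``subtract $x_{i+1}x_{i+1}$'' corrections in $\kappa$ are precisely designed so that the shifts introduced by the face maps cancel in pairs. The cleanest way to organize this is to show that $\kappa_n$ is, up to the identification with reduced tensors, an explicit chain isomorphism onto a very simple complex (essentially the bar-type complex with the trivial differential coming from $R$), and that $\partial^{YB}$ transported along $\kappa$ agrees with $\partial^{YB}$ on that image.

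An alternative, perhaps slicker, route is to use that $\mathrm{Id} - \kappa_n$ is the stated splitting section $C_n^{YB}(X) \to C_n^D(X)$ of the short exact sequence $0 \to C_*^D \to C_*^{YB} \to C_*^{NYB} \to 0$. From the excerpt we already know $C_*^D$ is a subcomplex; if I can independently argue that the image of $\mathrm{Id}-\kappa_n$ is itself closed under $\partial^{YB}$ — i.e. that $\partial^{YB}_n\big((\mathrm{Id}-\kappa_n)(\textbf{x})\big) \in \mathrm{Im}(\mathrm{Id}-\kappa_{n-1})$ — then $\kappa_*$ is forced to be a chain map, since it is the complementary projection $\mathrm{Id} - (\mathrm{Id}-\kappa_*)$ onto a complementary $\partial^{YB}$-stable subcomplex. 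For a cyclic rack this image is spanned by tuples all of whose adjacent differences $x_j - x_{j+1} - 1$ vanish, which is a simple affine condition manifestly preserved by the shift-and-delete face maps; verifying that closure is a short computation once the face-map formulas are in hand.

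The main obstacle I anticipate is purely bookkeeping: keeping track of how the face maps shift each coordinate by integers depending on the index, and making sure the signs and the $\pm 1$ shifts in $\kappa$ line up so every would-be obstruction term is killed. There is no deep structural difficulty — cyclic racks are essentially ``additive'' and the $R$-operator is nearly trivial — so the proof reduces to a careful case analysis of $d_{j,n}^l$ and $d_{j,n}^r$ for $j<i$, $j=i$, $j=i+1$, $j>i+1$ (mirroring the four cases in the previous theorem's proof), checking in each case that $d_{j,n}^\varepsilon \circ \kappa_n$ and $\kappa_{n-1} \circ d_{j,n}^\varepsilon$ agree as maps into $C_{n-1}^{YB}(X)$. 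I would present the face-map formulas first, then dispatch the cases, flagging that the essential input is Lemma~\ref{lemma: identities} specialized to the cyclic-rack relations $x_ix_i = x_i+1$ and $x_i/y = x_i - 1$.
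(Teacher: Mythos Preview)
There is a concrete error at the very start that undermines both routes you sketch. For the cyclic rack one has $xy=x+1$ and $x/y=x-1$, so
\[
R(x,y)=(y(x/y),\,x/y)=(y+1,\,x-1),
\]
exactly as the paper records in its proof. Your formula $R(x,y)=(x,y-1)$ is wrong: $R$ genuinely swaps the two coordinates (with shifts), so the face maps are not ``identity in the first slot, shift in the second.'' With the correct $R$ one finds, for instance, $d_{j,n}^l(x_1,\ldots,x_n)=(x_1-1,\ldots,x_{j-1}-1,x_{j+1},\ldots,x_n)$ and a dual formula for $d_{j,n}^r$; the bookkeeping is still manageable, but it is not the computation you set up.

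Your alternative route also has a logical gap. You propose to conclude that $\kappa_*$ is a chain map from the fact that $\mathrm{Im}(\mathrm{Id}-\kappa_n)=C_n^D(X)$ is $\partial^{YB}$-stable. But that stability is already the content of the preceding theorem, and by itself it does \emph{not} force the complementary projection $\kappa_*$ to commute with $\partial^{YB}$: a projection onto a subcomplex along an arbitrary linear complement need not be a chain map. What would suffice is showing that $\mathrm{Im}(\kappa_n)$ is also $\partial^{YB}$-stable, which is a different (and essentially equivalent) verification. Separately, your description of the image as ``tuples all of whose adjacent differences $x_j-x_{j+1}-1$ vanish'' is not correct for either projection; $C_n^D(X)$ is spanned by tuples having \emph{some} adjacent pair of the form $(a+1,a)$, not tuples where every adjacent pair has that form.

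The paper instead argues by induction on $n$, treating $\partial^l$ and $\partial^r$ separately and using the recursion $d_{j,n+1}^l(y\otimes\mathbf{x})=(y-1)\otimes d_{j-1,n}^l(\mathbf{x})$ (and its $r$-analogue) that the correct $R$ provides. If you fix $R$ and switch to that inductive scheme, the computation goes through cleanly.
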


\begin{proof}
Let $X$ be a cyclic rack. Since $X$ is a cycle set, we can obtain the corresponding solution of the Yang-Baxter equation $R:X\times X\to X\times X$ defined by $R(x,y)=(y+1,x-1)$ by Theorem \ref{thm: Rump}. We prove that $\partial_{n}^{YB} \circ \kappa_{n} = \kappa_{n-1} \circ \partial_{n}^{YB}$ for $n \geq 2$ inductively.\\
For $n=2,$ $\partial_{2}^{YB} \circ \kappa_{2} ((x_{1}, x_{2}))=\partial_{2}^{YB}((x_{1}, x_{2})-(x_{2}x_{2}, x_{2}))=\partial_{2}^{YB}((x_{1}, x_{2}))=\kappa_{1} \circ \partial_{2}^{YB} ((x_{1}, x_{2})).$ We denote by $\partial_{n}^{l}=\sum\limits_{j=1}^{n}(-1)^{j+1}d_{j,n}^{l},$ $\partial_{n}^{r}=\sum\limits_{j=1}^{n}(-1)^{j+1}d_{j,n}^{r},$ and $\textbf{x}=(x_{1}, \ldots, x_{n}).$ Note that $\partial_{n}^{YB}=\partial_{n}^{l}-\partial_{n}^{r}.$ Suppose that the statement is true for some $n \geq 2.$ More precisely, we assume that $\partial_{n}^{l} \circ \kappa_{n} = \kappa_{n-1} \circ \partial_{n}^{l}$ and $\partial_{n}^{r} \circ \kappa_{n} = \kappa_{n-1} \circ \partial_{n}^{r}.$ Then we have
\begin{align*}
\partial_{n+1}^{l} \circ \kappa_{n+1}(y \otimes \textbf{x})
&=\partial_{n+1}^{l}(y \otimes \kappa_{n}(\textbf{x}) - (x_{1}x_{1}) \otimes \kappa_{n}(\textbf{x}))\\
&=\kappa_{n}(\textbf{x}) - (y-1) \otimes \partial_{n}^{l}(\kappa_{n}(\textbf{x})) - \kappa_{n}(\textbf{x}) + (x_{1}x_{1}-1) \otimes \partial_{n}^{l}(\kappa_{n}(\textbf{x}))\\
&=\kappa_{n}(\textbf{x}) - \kappa_{n}(\textbf{x}) - ((y-1)-x_{1}) \otimes \partial_{n}^{l}(\kappa_{n}(\textbf{x}))\\
&=\kappa_{n}(\textbf{x}) - (x_{1}-x_{2}x_{2}) \otimes \kappa_{n-1}(x_{2}, \ldots, x_{n}) - ((y-1)-(x_{1}-1)^{2}) \otimes \kappa_{n-1}(\partial_{n}^{l}(\textbf{x}))\\
&=\kappa_{n}(\textbf{x}) + ((y-1)-(x_{1}-1)^{2}-(y-1)+x_{2}x_{2}) \otimes \kappa_{n-1}(x_{2}, \ldots, x_{n}) \\
&\hspace{5mm} - ((y-1)-(x_{1}-1)^{2}) \otimes \kappa_{n-1}(\partial_{n}^{l}(\textbf{x}))\\
&=\kappa_{n}(\textbf{x}) - \kappa_{n}((y-1) \otimes \partial_{n}^{l}(\textbf{x}))\\
&=\kappa_{n} \circ \partial_{n+1}^{l}(y \otimes \textbf{x})
\end{align*}
and
\begin{align*}
\partial_{n+1}^{r} \circ \kappa_{n+1}(y \otimes \textbf{x})
&=\partial_{n+1}^{r}( (y - x_{1}x_{1}) \otimes \kappa_{n}(\textbf{x}))\\
&=d_{1,n+1}^{r}((y-x_{1}x_{1}) \otimes \kappa_{n}(\textbf{x})) - (y - x_{1}x_{1}) \otimes (\partial_{n}^{r} \circ \kappa_{n}(\textbf{x}))\\
&=- (y - x_{1}x_{1}) \otimes (\kappa_{n-1} \circ \partial_{n}^{r}(\textbf{x}))\\
&=- (y - x_{1}x_{1}) \otimes (\kappa_{n-1} \circ \partial_{n}^{r}(\textbf{x})) + \kappa_{n}(x_{1}+1, x_{2}+1, \ldots , x_{n}+1)\\
&\hspace{5mm} - ((x_{1}+1) -y +y - (x_{2}+1)(x_{2}+1)) \otimes \kappa_{n-1}(x_{2}+1, \ldots , x_{n}+1)\\
&=\kappa_{n}( d_{1,n+1}^{r}(y \otimes \textbf{x}) - y \otimes \partial_{n}^{r}(\textbf{x}) )\\
&=\kappa_{n} \circ \partial_{n+1}^{r}(y \otimes \textbf{x}).
\end{align*}
Therefore, $\partial_{n+1}^{YB} \circ \kappa_{n+1} = \kappa_{n} \circ \partial_{n+1}^{YB}$ as desired.
\end{proof}

\begin{theorem}
Let $X$ be a cyclic rack. Then the set-theoretic Yang-Baxter homology of $X$ splits into the normalized and degenerate parts, that is
$$H_{*}^{YB}(X)=H_{*}^{NYB}(X) \oplus H_{*}^{D}(X).$$
\end{theorem}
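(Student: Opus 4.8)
The plan is to promote the split short exact sequence of chain complexes displayed just before Lemma \ref{lemma: splitting} to an honest direct sum decomposition $C_*^{YB}(X) \cong C_*^D(X) \oplus C_*^{NYB}(X)$, from which the claimed splitting of homology is immediate. The splitting data is supplied by the maps $\kappa_n$, and the role of the cyclic rack hypothesis is precisely to guarantee, via Lemma \ref{lemma: splitting}, that $\kappa_*$ is a chain map.

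First I would record two elementary facts about $\kappa_n$ that hold for any Rump right quasigroup. (a) $\kappa_n$ kills $C_n^D(X)$: if $\mathbf{x} = s_{i,n}^{YB}(\mathbf{y})$, then the $i$-th coordinate of $\mathbf{x}$ is $y_i y_i$ and the $(i+1)$-st is $y_i$, so the factor $x_i - x_{i+1}x_{i+1}$ occurring in $\kappa_n(\mathbf{x})$ vanishes. (b) $\mathrm{Id} - \kappa_n$ maps $C_n^{YB}(X)$ into $C_n^D(X)$: writing $\kappa_n(\mathbf{x}) = \sum_{S \subseteq \{1,\dots,n-1\}} (-1)^{|S|}\mathbf{x}^S$, where $\mathbf{x}^S$ is obtained from $\mathbf{x}$ by replacing the $i$-th coordinate by the square of the $(i+1)$-st coordinate for each $i \in S$, the $S = \emptyset$ term is $\mathbf{x}$ itself (which cancels against $\mathrm{Id}$), while for $S \neq \emptyset$ with $j = \max S$ the tuple $\mathbf{x}^S$ has at position $j$ the square of its $(j+1)$-st entry, hence lies in $s_{j,n}^{YB}(C_{n-1}^{YB}(X)) \subseteq C_n^D(X)$. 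Facts (a) and (b) together say that $\mathrm{Id} - \kappa_n$ is a retraction of the inclusion $C_n^D(X) \hookrightarrow C_n^{YB}(X)$; moreover $(\mathrm{Id}-\kappa_n)(C_n^{YB}(X)) \subseteq C_n^D(X)$ and $\kappa_n|_{C_n^D(X)} = 0$ give $\kappa_n - \kappa_n^2 = \kappa_n\circ(\mathrm{Id} - \kappa_n) = 0$, so $\kappa_n$ is an idempotent with $\ker \kappa_n = C_n^D(X)$ and image a complement of $C_n^D(X)$ in $C_n^{YB}(X)$.

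Now bring in Lemma \ref{lemma: splitting}: since $X$ is a cyclic rack, $\kappa_*$ is a chain map, hence $\mathrm{im}(\kappa_*)$ and $\ker(\kappa_*) = C_*^D(X)$ are subcomplexes of $C_*^{YB}(X)$, and the degreewise idempotent decompositions of the previous paragraph assemble into a direct sum of chain complexes $C_*^{YB}(X) = C_*^D(X) \oplus \mathrm{im}(\kappa_*)$. The quotient map $C_*^{YB}(X) \to C_*^{NYB}(X)$ then restricts to an isomorphism of chain complexes $\mathrm{im}(\kappa_*) \to C_*^{NYB}(X)$ (injective since $\mathrm{im}(\kappa_*) \cap C_*^D(X) = 0$, surjective since $C_*^{YB}(X) = C_*^D(X) + \mathrm{im}(\kappa_*)$), so $C_*^{YB}(X) \cong C_*^D(X) \oplus C_*^{NYB}(X)$ as chain complexes. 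Applying the additive homology functor gives $H_*^{YB}(X) = H_*^{NYB}(X) \oplus H_*^D(X)$, and tensoring with $A$, respectively applying $\mathrm{Hom}(-,A)$, preserves the decomposition since all the groups involved are free abelian, so the analogous splitting holds with coefficients in any abelian group $A$.

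There is no genuine obstacle once Lemma \ref{lemma: splitting} is available: the remaining argument is the standard fact that a chain retraction onto a subcomplex splits the complex. The one step that requires care is fact (b) above, where the explicit form of the degeneracy maps $s_{j,n}^{YB}$ is exactly what forces $\mathrm{Id} - \kappa_n$ to take values in the degenerate subgroup $C_n^D(X)$ rather than merely in $C_n^{YB}(X)$.
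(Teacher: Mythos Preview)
Your proof is correct and follows essentially the same approach as the paper: both arguments use Lemma~\ref{lemma: splitting} to upgrade the groupwise section $\mathrm{Id}-\kappa_n$ to a chain map, then invoke the standard splitting argument. You are more explicit in verifying the facts (a) and (b) that the paper asserts are ``easy to see,'' and you split at the chain-complex level rather than via the splitting lemma applied to the long exact sequence, but these are minor presentational differences.
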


\begin{proof}
By Lemma \ref{lemma: splitting}, $\textrm{Id}_{C_{n}^{YB}(X)}-\kappa_{n}:C_{n}^{YB}(X) \rightarrow C_{n}^{D}(X)$ is a chain map. Then the composition of the map $H_{n}^{D}(X) \rightarrow H_{n}^{YB}(X)$ and the homomorphism induced from $\textrm{Id}_{C_{n}^{YB}(X)}-\kappa_{n}$ forms the identity map on $H_{n}^{D}(X).$ Therefore, by the splitting lemma, we have $H_{*}^{YB}(X)=H_{*}^{NYB}(X) \oplus H_{*}^{D}(X).$
\end{proof}

\begin{remark*}
Decompositions in homology of a semi-strong skew cubical structure were discussed in \cite{LV}. Calculating in a cyclic rack, it is not difficult to show that conditions (2.5) and (2.6) of \cite{LV} fail.
\end{remark*}

\begin{example}
Table \ref{Tb:H} summarizes some computational results on (normalized) set-theoretic Yang-Baxter homology groups. Here, $C_{4}$ is the cyclic rack of order $4$, $R_{4}$ is the dihedral quandle of order $4$, $X_{4}$ is the affine cycle set $\mathrm{Aff}(\mathbb F_2^2,\binom{1\ 0}{1\ 1},\binom{0\ 1}{1\ 0},\binom{0}{0})$ (depicted in Table \ref{table 1}), and $X_{16}$ is the affine cycle set $\mathrm{Aff}(\mathbb F_4^2,\binom{0\ u}{u^2\ 0},\binom{0\ 1}{1\ 0},\binom{0}{0})$, where $u$ is a primitive element of $\mathbb F_4$ (depicted in Table \ref{table 2}).
\end{example}

\begin{table}[h]
  \centering
  \caption{An affine cycle set of order $16$}\label{table 2}
  \begin{tabular}{c|cccccccccccccccc}
 $\ast$ & 1 & 2 & 3 & 4 & 5 & 6 & 7 & 8 & 9 & 10 & 11 & 12 & 13 & 14 & 15 & 16\\
 \hline
 1 & 1 & 5 & 9 & 13 & 2 & 6 & 10 & 14 & 3 & 7 & 11 & 15 & 4 & 8 & 12 & 16 \\
 2 & 9 & 13 & 1 & 5 & 10 & 14 & 2 & 6 & 11 & 15 & 3 & 7 & 12 & 16 & 4 & 8 \\
 3 & 13 & 9 & 5 & 1 & 14 & 10 & 6 & 2 & 15 & 11 & 7 & 3 & 16 & 12 & 8 & 4 \\
 4 & 5 & 1 & 13 & 9 & 6 & 2 & 14 & 10 & 7 & 3 & 15 & 11 & 8 & 4 & 16 & 12 \\
 5 & 4 & 8 & 12 & 16 & 3 & 7 & 11 & 15 & 2 & 6 & 10 & 14 & 1 & 5 & 9 & 13 \\
 6 & 12 & 16 & 4 & 8 & 11 & 15 & 3 & 7 & 10 & 14 & 2 & 6 & 9 & 13 & 1 & 5 \\
 7 & 16 & 12 & 8 & 4 & 15 & 11 & 7 & 3 & 14 & 10 & 6 & 2 & 13 & 9 & 5 & 1 \\
 8 & 8 & 4 & 16 & 12 & 7 & 3 & 15 & 11 & 6 & 2 & 14 & 10 & 5 & 1 & 13 & 9 \\
 9 & 2 & 6 & 10 & 14 & 1 & 5 & 9 & 13 & 4 & 8 & 12 & 16 & 3 & 7 & 11 & 15 \\
 10 & 10 & 14 & 2 & 6 & 9 & 13 & 1 & 5 & 12 & 16 & 4 & 8 & 11 & 15 & 3 & 7 \\
 11 & 14 & 10 & 6 & 2 & 13 & 9 & 5 & 1 & 16 & 12 & 8 & 4 & 15 & 11 & 7 & 3 \\
 12 & 6 & 2 & 14 & 10 & 5 & 1 & 13 & 9 & 8 & 4 & 16 & 12 & 7 & 3 & 15 & 11 \\
 13 & 3 & 7 & 11 & 15 & 4 & 8 & 12 & 16 & 1 & 5 & 9 & 13 & 2 & 6 & 10 & 14 \\
 14 & 11 & 15 & 3 & 7 & 12 & 16 & 4 & 8 & 9 & 13 & 1 & 5 & 10 & 14 & 2 & 6 \\
 15 & 15 & 11 & 7 & 3 & 16 & 12 & 8 & 4 & 13 & 9 & 5 & 1 & 14 & 10 & 6 & 2 \\
 16 & 7 & 3 & 15 & 11 & 8 & 4 & 16 & 12 & 5 & 1 & 13 & 9 & 6 & 2 & 14 & 10

\end{tabular}
\end{table}

\def\Z{\mathbb Z}

\begin{table}
\centering
\caption{(Normalized) set-theoretic Yang-Baxter homology groups}\label{Tb:H}
\ra{1.3}
\begin{tabular}{@{}llllll@{}}
    \toprule
    $n$&$1$&$2$&$3$&$4$&$5$\\
    \midrule
    $H_n^{YB}(C_3)$       &$\Z\oplus\Z_3$   &$\Z^3$                             &$\Z^{9}\oplus\Z_3$                &$\Z^{27}$                          &$\Z^{81}\oplus\Z_3$\\
    $H_n^D(C_3)$          &$0$              &$\Z$                               &$\Z^5$                             &$\Z^{19}$                          &$\Z^{65}$\\
    $H_n^{NYB}(C_3)$      &$\Z\oplus\Z_3$   &$\Z^2$                             &$\Z^4\oplus\Z_3$                   &$\Z^{8}$                          &$\Z^{16}\oplus\Z_3$\\
    &&&&&\\
    $H_n^{YB}(C_4)$       &$\Z\oplus\Z_4$   &$\Z^4$                             &$\Z^{16}\oplus\Z_4$                &$\Z^{64}$                          &$\Z^{256}\oplus\Z_4$\\
    $H_n^D(C_4)$          &$0$              &$\Z$                               &$\Z^7$                             &$\Z^{37}$                          &$\Z^{175}$\\
    $H_n^{NYB}(C_4)$      &$\Z\oplus\Z_4$   &$\Z^3$                             &$\Z^9\oplus\Z_4$                   &$\Z^{27}$                          &$\Z^{81}\oplus\Z_4$\\
    &&&&&\\
    $H_n^{YB}(C_5)$       &$\Z\oplus\Z_5$   &$\Z^5$                             &$\Z^{25}\oplus\Z_5$                &$\Z^{125}$                          &$\Z^{625}\oplus\Z_5$\\
    $H_n^D(C_5)$          &$0$              &$\Z$                               &$\Z^{9}$                             &$\Z^{61}$                          &$\Z^{369}$\\
    $H_n^{NYB}(C_5)$      &$\Z\oplus\Z_5$   &$\Z^4$                             &$\Z^{16}\oplus\Z_5$                   &$\Z^{64}$                          &$\Z^{256}\oplus\Z_5$\\
    &&&&&\\
    $H_n^{YB}(R_4)$       &$\Z^2\oplus\Z_2$ &$\Z^6\oplus\Z_2$                   &$\Z^{20}\oplus\Z_2^3\oplus\Z_4$    &$\Z^{72}\oplus\Z_2^7\oplus\Z_4$    &$\Z^{272}\oplus\Z_{2}^{17} \oplus\Z_{4}^{2}$\\
    $H_n^D(R_4)$          &$0$              &$\Z^2$                             &$\Z^{10}\oplus\Z_2^2$              &$\Z^{44}\oplus\Z_2^6$              &$\Z^{190}\oplus\Z_{2}^{16}$\\
    $H_n^{NYB}(R_4)$      &$\Z^2\oplus\Z_2$ &$\Z^4\oplus\Z_2$                   &$\Z^{10}\oplus\Z_2\oplus\Z_4$      &$\Z^{28}\oplus\Z_2\oplus\Z_4$      &$\Z^{82}\oplus\Z_{2} \oplus\Z_{4}^{2}$\\
    &&&&&\\
    $H_n^{YB}(X_4)$       &$\Z\oplus\Z_2$   &$\Z^3\oplus\Z_2$                   &$\Z^{10}\oplus\Z_2^2$              &$\Z^{36}\oplus\Z_2^3\oplus\Z_4$    &$\Z^{136}\oplus\Z_2^5$\\
    $H_n^D(X_4)$          &$0$              &$\Z$                               &$\Z^5$                             &$\Z^{22}\oplus\Z_2$                &$\Z^{95}\oplus\Z_2^2$\\
    $H_n^{NYB}(X_4)$      &$\Z\oplus\Z_2$   &$\Z^2\oplus\Z_2$                   &$\Z^5\oplus\Z_2^2$                 &$\Z^{14}\oplus\Z_2^2\oplus\Z_4$    &$\Z^{41}\oplus\Z_2^3$\\
    &&&&&\\
    $H_n^{YB}(X_{16})$    &$\Z\oplus\Z_2^3$ &$\Z^{10}\oplus\Z_2^6\oplus\Z_4^3$  & $\Z^{136}\oplus\Z_{2}^{30}\oplus\Z_{4}$   &                                   &\\
    $H_n^{D}(X_{16})$     &$0$              &$\Z$                               & $\Z^{19}$                           &                                   &\\
    $H_n^{NYB}(X_{16})$   &$\Z\oplus\Z_2^3$ &$\Z^9\oplus\Z_2^6\oplus\Z_4^3$     & $\Z^{117}\oplus\Z_{2}^{30}\oplus\Z_{4}$   &                                   &\\
    \bottomrule
\end{tabular}
\end{table}
Note that for the dihedral quandle of order $4,$ its rack/quandle homology groups do not contain $\mathbb{Z}_{4}$-torsion, while its set-theoretic/normalized set-theoretic Yang-Baxter homology groups do.

The following are some computational results on the (Normalized) set-theoretic Yang-Baxter homology groups of cyclic racks.

\begin{theorem}\label{thm:C2}
For the cyclic rack $C_{2}$ of order $2,$
$$
H_{n}^{NYB}(C_{2}) =
\left\{
  \begin{array}{ll}
    \mathbb{Z} \oplus \mathbb{Z}_{2}, & \hbox{$n$ is odd;} \\
    \mathbb{Z}, & \hbox{$n$ is even.}
  \end{array}
\right.
$$
\end{theorem}

\begin{proof}
Note that for every dimension $n,$ $C_{n}^{NYB}(C_{2})$ has two generators $(0, \ldots, 0)$ and $(1, \ldots, 1).$
\begin{align*}
\partial_{n}^{NYB}(x_{1}, \ldots, x_{n})
&=(x_{2}, \ldots , x_{n}) - (x_{2}+1, \ldots , x_{n}+1)\\
&+ (-1)^{n-1}(x_{1}-1, \ldots , x_{n-1}-1) + (-1)^{n}(x_{1}, \ldots , x_{n-1})\\
&=(x_{1}, \ldots , x_{n-1})  + (-1)^{n}(x_{1}, \ldots , x_{n-1})\\
&+ (-1)^{n-1}(x_{1}+1, \ldots , x_{n-1}+1) - (x_{1}+1, \ldots , x_{n-1}+1)\\
&=\left\{
    \begin{array}{ll}
      0 & \hbox{if $n$ is odd;} \\
      2(x_{1}, \ldots , x_{n-1}) - 2(x_{1}+1, \ldots , x_{n-1}+1) & \hbox{if $n$ is even.}
    \end{array}
  \right.
\end{align*}
Therefore,\\
$H_{n}^{NYB}(C_{2}) \cong
\left\{
\begin{array}{ll}
\langle (0, \ldots, 0), (0, \ldots, 0) - (1, \ldots, 1) ~|~ 2\{(0, \ldots, 0) - (1, \ldots, 1)\} \rangle & \hbox{if $n$ is odd;} \\
\langle(0, \ldots, 0) + (1, \ldots, 1) ~|~ - \rangle & \hbox{if $n$ is even}
\end{array}
\right.
$\\
as desired.
\end{proof}

\begin{theorem}\label{thm:Cm}
Let $C_{m}$ be the cyclic rack of order $m.$ Then $H_{1}^{NYB}(C_{m}) = \mathbb{Z} \oplus \mathbb{Z}_{m}.$
\end{theorem}

\begin{proof}
Since $\partial_{2}^{NYB}(x_{1}, x_{2}) = (x_{2}) - (x_{2}+1) - (x_{1}-1) + (x_{1}),$ one can check that
$$\partial_{2}^{NYB}(x, x+k) = \sum\limits_{i=0}^{k}\partial_{2}^{NYB}(x+i, x+i)$$
for each $k=0, \ldots , m-2.$
Thus, we can perform column operations on $\partial_{2}^{NYB}$ in order to obtain
$\left(
                                \begin{array}{ccc}
                                  C & \vdots & O \\
                                \end{array}
                              \right)
,$
where $O$ is a zero matrix and \\
$C = \left(
             \begin{array}{cccccc}
               2 & -1 &   &  & 0 & -1 \\
               -1 & 2 &   &  & \vdots & 0 \\
               0 & -1 &  \ddots &  & 0 & \vdots \\
               \vdots & 0 &  & \ddots & -1 & 0 \\
               0 & \vdots &  &  & 2 & -1 \\
               -1 & 0 &   &  & -1 & 2 \\
             \end{array}
           \right)$
is a circulant matrix.\\
Note that the Smith normal form of $C$ is $\left(
                                             \begin{array}{ccc}
                                               I_{m-2} & O & O \\
                                               O & m & 0 \\
                                               O & 0 & 0 \\
                                             \end{array}
                                           \right)
,$ where $I_{m-2}$ is the identity matrix of size $m-2.$ Therefore, $H_{1}^{NYB}(C_{m}) = \mathbb{Z} \oplus \mathbb{Z}_{m}.$
\end{proof}

Theorem \ref{thm:C2} and Theorem \ref{thm:Cm} together with our computational data in Table \ref{Tb:H} support the following conjecture.

\begin{conjecture}
For the cyclic rack $C_{m}$ of order $m,$\\
$$H_{n}^{NYB}(C_{m}) = \left\{
                        \begin{array}{ll}
                          \mathbb{Z}^{((m-1)^{n-1})} \oplus \mathbb{Z}_{m} & \hbox{if $n$ is odd;} \\
                          \mathbb{Z}^{((m-1)^{n-1})} & \hbox{if $n$ is even.}
                        \end{array}
                      \right.
$$
\end{conjecture}

\section{Cocycle link invariants obtained from the normalized set-theoretic Yang-Baxter homology}\label{Section 3}

Biquandles\cite{FRS, KR} can be used to construct cocycle invariants of knots and links\cite{CES}. For a given cycle set $X,$ we consider the corresponding involutive right non-degenerate solution $R(x,y) = (y(x / y),x / y).$ If it is also left non-degenerate, then the algebraic structure $(X, *_{1}, *_{2})$ equipped with two binary operations $*_{1}, *_{2}$ on $X$ defined by $x *_{1} y = y(x / y)$ and $x *_{2} y = x / y$ becomes a biquandle. We can, therefore, use finite cycle sets to define cocycle link invariants as Carter, Elhamdadi, and Saito\cite{CES} did, and moreover higher dimensional cocycles are expected to be used to construct invariants of higher dimensional knots. See \cite{PR} for further details. For $3$-dimensional links, a detailed construction is as follows.

Let $D_{L}$ be an oriented link diagram of a given oriented link $L,$ and let $\mathcal{R}$ be the set of all semiarcs of $D_{L}.$ Given a finite cycle set $X,$ the map $\mathcal{C}_{D_{L}}:\mathcal{R} \rightarrow X$ satisfying the relation depicted in Figure \ref{cyclesetcoloring} at each crossing of $D_{L}$ is said to be a \emph{cycle set coloring} of a link diagram $D_{L}$ by $X.$

\begin{figure}[h]
\centerline{{\psfig{figure=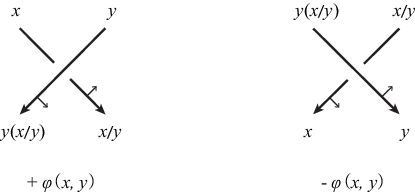,height=4cm}}}
\caption{Cycle set coloring relations at each crossing}
\label{cyclesetcoloring}
\end{figure}

Suppose that $X$ is a finite cycle set. Let $\varphi$ be a $2$-cocycle of $C^{2}_{NYB}(X;A)$ with coefficients in an abelian group $A.$ For a given cycle set coloring $\mathcal{C}_{D_{L}},$ a Boltzmann weight, denoted by $B_{\varphi}(\mathcal{C}_{D_{L}}),$ associated with $\varphi$ is defined as follows. We assign the weight $\varepsilon(\tau)\varphi(x,y)$ for each crossing $\tau,$ where $\varepsilon(\tau)=1$ or $-1$ if $\tau$ is a positive crossing or a negative crossing, respectively and $x$ and $y$ denote the colors of the semiarcs as depicted in Figure \ref{cyclesetcoloring}. The Boltzmann weight $B_{\varphi}(\mathcal{C}_{D_{L}})$ is then defined by $B_{\varphi}(\mathcal{C}_{D_{L}})=\sum\limits_{\tau} \varepsilon(\tau)\varphi(x,y).$\\
We now consider the value $\Phi(L)=\sum\limits_{\mathcal{C}_{D_{L}}}B_{\varphi}(\mathcal{C}_{D_{L}}) \in \mathbb{Z}[A],$ where $\mathbb{Z}[A]$ is the group ring of $A$ over $\mathbb{Z}.$

\begin{proposition}\label{link invariant}
$\Phi(L)$ is an oriented link invariant.
\end{proposition}

Before proving the above theorem, let us consider cycle set colorings for oriented Reidemeister moves of type I. We start with the following lemma.

\begin{lemma}
Let $X$ be a finite cycle set. Given $x\in X$, there is a unique $y\in X$ such that $x(y/x)=y$.
\end{lemma}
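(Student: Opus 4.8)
The plan is to use the fact that $X$ is a \emph{finite} Rump right quasigroup, so that a counting/injectivity argument reduces existence to uniqueness (or vice versa). Concretely, define the map $f\colon X \to X$ by $f(y) = x(y/x)$ for the fixed element $x$. The claim is precisely that $f$ has a unique fixed point. Since $X$ is finite, it suffices to show that $f$ is a bijection whose associated ``fixed-point equation'' $f(y)=y$ has at most one solution; alternatively, and more in the spirit of the paper, I would show directly that the map $g\colon X \to X$ sending $y$ to the unique element with a certain divisibility property is a bijection, so the equation $x(y/x)=y$ has exactly one solution.

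First I would unwind the equation. Writing $t = y/x$, so that $y = tx$ (using that right translations are permutations in a right quasigroup), the equation $x(y/x)=y$ becomes $xt = tx$, i.e. we are looking for $t \in X$ with $tx = xt$; given such a $t$, set $y = tx$. Thus the lemma is equivalent to: for each fixed $x$, there is a unique $t \in X$ with $xt = tx$, i.e. the maps $l_x$ and $r_x$ agree at exactly one point. Here is where the Rump identity \eqref{Eq:Rump} and $\Delta$-type considerations should enter: one wants to show the map $t \mapsto (xt)/(tx)$ (or a similar composite built from left and right translations) is a bijection of $X$, so that it takes the value ``neutral'' exactly once — the value realized when $xt=tx$. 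Since every finite Rump right quasigroup is uniquely $2$-divisible by Proposition~\ref{Prop:Rump}(2), hence $\Delta$-bijective by Proposition~\ref{Prop:Rump}(1), I expect to feed pairs through $\Delta(u,v)=(uv,vu)$ and use injectivity of $\Delta$ to force $u=v$, pinning down $t$.

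The cleanest route I would attempt: show the function $h\colon X\to X$, $h(t) = (xt)/x$ is a permutation of $X$ (it is a composite $r_x^{-1}\circ l_x$ of permutations, hence automatically bijective), and then observe that $xt=tx$ holds iff $h(t) = t$ — wait, that is not automatic, so instead I would look for the value $c\in X$ such that $h(t)=c$ is equivalent to $xt=tx$, namely $c$ should satisfy $xc' = c'x$ reflexively; to locate it, use $\Delta$-bijectivity to produce the unique pair $(t,t)$ on the diagonal mapping appropriately. Concretely: by $\Delta$-bijectivity there is a unique $(u,v)$ with $\Delta(u,v) = (x\cdot(x/x)\text{-adjusted target})$; arranging the target to be a ``symmetric'' pair $(w,w)$ forces $u=v$, and tracing back gives the unique $t$ with $xt=tx$, whence the unique $y=tx=f(y)$.

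The main obstacle I anticipate is identifying the correct symmetric pair to feed into $\Delta^{-1}$ — i.e. massaging $x(y/x)=y$ into the form $\Delta(y,z)=(\text{something},\text{the same something})$ using only the right quasigroup axioms and the Rump identity \eqref{Eq:Rump}. This is a short computation in the identity but requires care about which translations are invertible; once the equation is in ``$\Delta$ applied to a pair equals a diagonal-adjacent pair'' form, finiteness (via $2$-divisibility and $\Delta$-bijectivity from Proposition~\ref{Prop:Rump}) closes it immediately. I would also double-check the edge behavior using the cyclic rack $C_n$ as a sanity check, where $y/x = y-1$ and $x(y/x) = y$ becomes $(y-1)+1 = y$, which holds for every $y$ — so in that degenerate case uniqueness fails unless ``unique'' is read correctly; this suggests the hypothesis that $X$ be genuinely used is the non-degeneracy / $2$-divisibility, and I would make sure the argument invokes it rather than silently assuming latinness.
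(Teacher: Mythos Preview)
Your reduction to the equation $xt = tx$ (via $t = y/x$) and your instinct to invoke $\Delta$-bijectivity are exactly right and match the paper's argument. But the paper's execution is much more direct, and two simple observations would have spared you the meandering.

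\emph{Existence.} Just take $y = xx$: then $y/x = x$ and $x(y/x) = xx = y$. You never state this.

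\emph{Uniqueness.} Once the equation reads $xt = tx$, i.e.\ $\Delta(x,t)$ lies on the diagonal of $X\times X$, note that $\Delta$ maps the diagonal onto itself (since $\Delta(a,a) = (a^2,a^2)$ and squaring is bijective by unique $2$-divisibility, Proposition~\ref{Prop:Rump}(2)). As $\Delta$ is a bijection of $X\times X$ by Proposition~\ref{Prop:Rump}(1), the preimage of the diagonal \emph{is} the diagonal. Hence $(x,t)$ is diagonal, so $t = x$ and $y = xx$. This is precisely the ``symmetric pair forces $u=v$'' step you gesture at but never justify; the justification is that $\Delta$ restricts to a bijection on the diagonal.

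Two concrete errors in your proposal. First, you claim $h = r_x^{-1}\circ l_x$ is ``automatically bijective,'' but in a mere right quasigroup $l_x$ need not be a permutation, so this is unjustified. Second, your cyclic-rack sanity check is miscomputed: in $C_n$ the operation is $a*b = a+1$ (independent of $b$), so $y/x = y-1$ and $x(y/x) = x*(y-1) = x+1$, not $y$. The equation $x(y/x)=y$ thus reads $x+1=y$, with \emph{unique} solution $y = x+1 = xx$, in full agreement with the lemma; there is no failure of uniqueness to worry about.
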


\begin{proof}
It is clear that $y=xx$ solves the equation. We next prove the uniqueness. With $z=y/x$ we have $y=zx$, and the equation becomes $xz=zx.$ By Proposition \ref{Prop:Rump} the $\Delta$-map $\Delta(a,b)=(ab,ba)$ is bijective. Note that it maps the diagonal onto itself. Assuming we have a solution, the $\Delta$-map assigns $(x,z)$ to $(xz,zx)$ which is a diagonal element. Hence $x=z$ and $y=xx.$
\end{proof}

According to the cycle set coloring convention and the above lemma, we can color the type I moves as shown in Figure \ref{R1move}.

\begin{figure}[h]
\centerline{{\psfig{figure=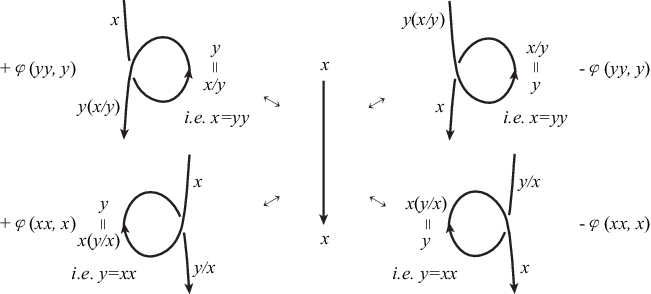,height=6.3cm}}}
\caption{Weights for Reidemeister Type I moves}
\label{R1move}
\end{figure}

\begin{proof}[Proof of Proposition \ref{link invariant}]
Let $\varphi$ be a $2$-cocycle in $C^{2}_{NYB}(X;A).$ Since there is a one-to-one correspondence between cycle set colorings of the diagrams before and after each Reidemeister move, it is sufficient to show that the Boltzmann weight remains unchanged under each Reidemeister move.

The weights assigned to the crossings of Reidemeister moves of type I have a form of $\pm\varphi(xx,x),$ see Figure \ref{R1move}, which is $0$ in $A$ because $(xx,x), (yy,y) \in C_{2}^{D}(X).$ Therefore, the Boltzmann weight is invariant under Reidemeister moves of Type I.

For a Reidemeister type II move, the weights assigned to the crossings are the same but have opposite signs, see Figure \ref{R2move} for example. The Boltzmann weight therefore is unchanged by the type II move.

\begin{figure}[h]
\centerline{{\psfig{figure=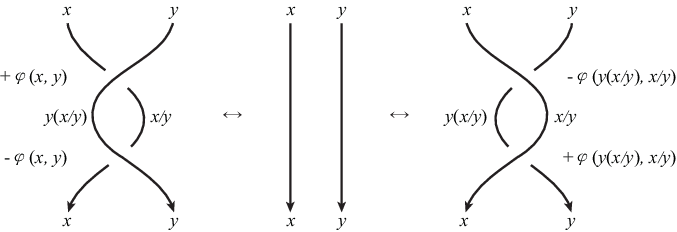,height=4.5cm}}}
\caption{Weights for Reidemeister Type II moves}
\label{R2move}
\end{figure}

Since $\varphi$ is a $2$-cocycle, for any $(x,y,z) \in C_{3}^{NYB}(X)$ we have
\begin{align*}
0
&=\delta_{2}(\varphi)(x,y,z)=\varphi \circ \partial_{3}^{NYB}(x,y,z)\\
&=\varphi\{(y,z)- (y(x/y), z((x/y)/z)) - (x/y, z) + (x, z(y/z)) + (x/(z(y/z)), y/z) -(x,y)\}
\end{align*}
which is the difference between the weights for the diagrams before and after a Reidemeister move of type III, see Figure \ref{R3move} for instance. In a similar way, one can show that the sum of weights remain unchanged under other possible type III moves. Therefore, the Boltzmann weight is invariant under type III moves.

\begin{figure}[h]
\centerline{{\psfig{figure=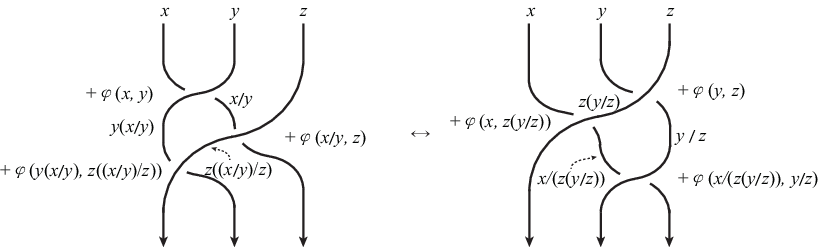,height=5cm}}}
\caption{Weights for Reidemeister Type III moves}
\label{R3move}
\end{figure}

\end{proof}

We investigate some non-trivial cocycles of the cyclic racks and use them to classify torus links.

\begin{theorem}
Let $C_{m}$ be the cyclic rack of order $m.$ For $n \geq 2,$ consider the maps $\theta_{n} \in C_{NYB}^{n}(C_{m}; \mathbb{Z}_{2})$ and $\xi_{n} \in C_{NYB}^{n}(C_{m}; \mathbb{Z}_{m})$ defined by
$$\theta_{n}(x_{1}, \ldots, x_{n})=\prod_{i=1}^{n}x_{i} ~(\hbox{\emph{mod}~} 2),$$
$$\xi_{n}(x_{1}, \ldots, x_{n})=\prod_{i=1}^{n-1}(x_{i}-x_{i+1}-1)~(\hbox{\emph{mod}~} m)$$
and extending linearly to all elements of $C^{NYB}_{n}(C_{m}).$ Then $\theta_{n}$ is an $n$-cocycle when $m$ is even and $\xi_{n}$ is an $n$-cocycle for any $m.$
\end{theorem}

\begin{proof}
(i) Note that $xx=x+1$ in $C_{m}.$ For every $(x_{1}, \ldots, x_{n}) \in C_{n}^{D}(C_{m}),$ $\prod\limits_{i=1}^{n}x_{i}$ is even, i.e., $\theta_{n}(x_{1}, \ldots, x_{n})=0~(\hbox{mod~} 2).$ We now only need to show that $\theta_{n} \circ \partial_{n+1}^{YB}=0.$\\
Let $(x_{1}, \ldots, x_{n+1}) \in C_{n+1}^{YB}(C_{m}).$ Since $m$ is even, $x$ and $x \pm 1$ have different parity in $C_{m}.$ Then we have
\begin{align*}
\theta_{n} \circ \partial_{n+1}^{YB}(x_{1}, \ldots, x_{n+1})
&=\theta_{n}\{(x_{2}, \ldots, x_{n+1}) - (x_{2}+1, \ldots, x_{n+1}+1)\\
&-(x_{1}-1, x_{3}, \ldots, x_{n+1}) + (x_{1}, x_{3}+1, \ldots, x_{n+1}+1)\\
&+ \cdots +(-1)^{n+1}(x_{1}-1, \ldots, x_{n}-1) - (-1)^{n+1}(x_{1}, \ldots, x_{n}) \}\\
&\equiv x_{2} \cdots x_{n+1} + (x_{2}+1) \cdots (x_{n+1}+1)\\
&+(x_{1}+1)x_{3} \cdots x_{n+1} + x_{1}(x_{3}+1) \cdots (x_{n+1}+1)\\
&+ \cdots + (x_{1}+1) \cdots (x_{n}+1) + x_{1} \cdots x_{n}\\
&= 2\left\{ \prod\limits_{i=1}^{n+1}(x_{i}+1) - \prod\limits_{i=1}^{n+1}(x_{i}) \right\}\\
&\equiv 0 ~(\hbox{mod~} 2).
\end{align*}
\\
(ii) It is easy to see that $\xi_{n}(x_{1}, \ldots, x_{n})=0~(\hbox{mod~} m)$ for every $(x_{1}, \ldots, x_{n}) \in C_{n}^{D}(C_{m}).$
Next, we show that $\xi_{n} \circ \partial_{n+1}^{YB} = 0.$
\begin{align*}
&\xi_{n} \circ (d_{i,n+1}^{l} - d_{i,n+1}^{r})(x_{1}, \ldots, x_{n+1})\\
&=\xi_{n}(x_{1}-1, \ldots, x_{i-1}-1, x_{i+1}, \ldots, x_{n+1})\\
&- \xi_{n}(x_{1}, \ldots, x_{i-1}, x_{i+1}+1, \ldots, x_{n+1}+1)\\
&=(x_{1}-x_{2}-1) \cdots (x_{i-2}-x_{i-1}-1)(x_{i-1}-x_{i+1}-2)(x_{i+1}-x_{i+2}-1) \cdots (x_{n}-x_{n+1}-1)\\
&-(x_{1}-x_{2}-1) \cdots (x_{i-2}-x_{i-1}-1)(x_{i-1}-x_{i+1}-2)(x_{i+1}-x_{i+2}-1) \cdots (x_{n}-x_{n+1}-1)\\
&=0.
\end{align*}
Therefore, $\xi_{n} \circ \partial_{n+1}^{YB} = \xi_{n} \circ \left( \sum\limits_{i=1}^{n+1}(-1)^{i+1}(d_{i,n+1}^{l} - d_{i,n+1}^{r}) \right) =0$ as desired.
\end{proof}

One can use the cocycles obtained from above to classify knots and links.

\begin{example}
Let $T(p,q)$ be the $(p,q)$-torus link and let $C_{m}$ be the cyclic rack of order $m.$ Consider the $2$-cocycle $\varphi \in \textrm{Hom}(C_{2}^{NYB}(C_{m}), \mathbb{Z}_{2})$ defined by
$$\varphi(x,y) \equiv xy ~ (\textrm{mod~} 2).$$
For $n \in \mathbb{N},$ we compute \footnote{Note that the value of the trivial link with $s$ components is $m^{s}(0)$ in $\mathbb{Z}[\mathbb{Z}_{2}].$}\\
\begin{enumerate}
  \item $\Phi(T(2, q)) =
\left\{
  \begin{array}{ll}
    m(0), & \hbox{$q=2n-1$;} \\
    m^{2}(0), & \hbox{$q=2(2n)$;} \\
    \frac{m^{2}}{2}(0) + \frac{m^{2}}{2}(1), & \hbox{$q=2(2n-1)$.}
  \end{array}
\right.
$
\vspace{5mm}
  \item $\Phi(T(3, q)) =
\left\{
  \begin{array}{ll}
    m(0), & \hbox{$q=3n-2$ or $3n-1$;} \\
    m^{3}(0), & \hbox{$q=3n$.}
  \end{array}
\right.
$
\vspace{5mm}
  \item $\Phi(T(4, q)) =
\left\{
  \begin{array}{ll}
    m(0), & \hbox{$q=4n-3$ or $4n-1$;} \\
    m^{2}(0), & \hbox{$q=4n-2$;} \\
    m^{4}(0), & \hbox{$q=4(2n)$;} \\
    \frac{m^{4}}{2}(0) + \frac{m^{4}}{2}(1), & \hbox{$q=4(2n-1)$.}
  \end{array}
\right.
$
\vspace{5mm}
  \item $\Phi(T(5, q)) =
\left\{
  \begin{array}{ll}
    m(0), & \hbox{$q=5n-4$, $5n-3$, $5n-2$, or $5n-1$;} \\
    m^{5}(0), & \hbox{$q=5n$.}
  \end{array}
\right.
$
\vspace{5mm}
  \item $\Phi(T(6, q)) =
\left\{
  \begin{array}{ll}
    m(0), & \hbox{$q=6n-5$ or $6n-1$;} \\
    m^{3}(0), & \hbox{$q=6n-3$;} \\
    m^{2}(0), & \hbox{$q=6(2n-1)+2$ or $q=6(2n-2)+4$;} \\
    \frac{m^{2}}{2}(0) + \frac{m^{2}}{2}(1), & \hbox{$q=6(2n-2)+2$ or $q=6(2n-1)+4$;} \\
    m^{6}(0), & \hbox{$q=6(2n)$;} \\
    \frac{m^{6}}{2}(0) + \frac{m^{6}}{2}(1), & \hbox{$q=6(2n-1)$.}
  \end{array}
\right.
$
\end{enumerate}

\end{example}

\section*{Acknowledgements}
J. H. Przytycki was partially supported by the Simons Collaboration Grant-316446 and CCAS Dean's Research Chair award. The work of Petr Vojt\v{e}chovsk\'y was supported by the PROF grant of the University of Denver. The work of Seung Yeop Yang was supported by the National Research Foundation of Korea(NRF) grant funded by the Korea government(MSIT) (No. 2019R1C1C1007402).

\end{document}